\numberwithin{equation}{section}
\newtheorem{theorem}{Theorem}[section]
\newtheorem{lemma}[theorem]{Lemma}
\newtheorem{cor}[theorem]{Corollary}
\newtheorem{remark}[theorem]{Remark}
\newtheorem{conjecture}[theorem]{Conjecture}
\newcommand{\bp}{{\bf p}}
\newcommand{\bv}{{\bf v}}
\newcommand{\eps}{\varepsilon}
\newcommand{\re}{\mathrm{Re} \, }
\newcommand{\R}{\mathbb{R}}
\newcommand{\N}{\mathbb{N}}
\newcommand{\Z}{\mathbb{Z}}
\newcommand{\C}{\mathbb{C}}
\DeclareMathOperator{\supp}{supp}
\DeclareMathOperator{\sinc}{sinc}
\DeclareMathOperator{\Ai}{{\rm Ai}}
 \newcommand{\rev}[1] {#1}
\title[Estimates for Forced Dirac]{On the Time-decay of solutions arising from periodically forced Dirac Hamiltonians}
\author{Joseph Kraisler}
\address{Department of Mathematics and Statistics, Amherst College, Amherst, MA 01002, USA}
\email{jkraisler@amherst.edu}
\author{Amir Sagiv}
\address{Department of Mathematical Sciences, New Jersey Institute of Technology, University Heights, Newark, NJ 07102, USA}
\email{amir.sagiv@njit.edu}
\author{Michael I.\ Weinstein}
\address{Department of Applied Physics and Applied Mathematics and Department of Mathematics, Columbia University, 500 W120 Street, New York, NY 10027, USA}
\email{miw2103@columbia.edu}
\begin{document}

\begin{abstract} 

There is increased interest in time-dependent (non-autonomous) Hamiltonians, stemming in part from the active field of Floquet quantum materials. Despite this, dispersive time-decay bounds, which reflect energy transport in such systems, have received little attention.
 
We study the dynamics of non-autonomous, time-periodically forced, Dirac Hamiltonians: $i\partial_t\alpha =\slashed{D}(t)\alpha$, where $\slashed{D}(t)=i\sigma_3\partial_x+ \nu(t)$ is time-periodic but not spatially localized. For the special case  $\nu(t)=m\sigma_1$, which models a  relativistic particle of constant mass $m$, 
one has a dispersive decay bound: $\|\alpha(t,x)\|_{L^\infty_x}\lesssim t^{-\frac12}$.
Previous analyses  of Schr\"odinger Hamiltonians (e.g. \cite{beceanu2011new, galtbayar2004local, goldberg2009strichartz})   suggest that this decay bound persists for small, spatially-localized and time-periodic $\nu(t)$. However, we show that this is not necessarily the case if  $\nu(t)$  is not spatially localized. Specifically, we study two non-autonomous Dirac models whose time-evolution (and monodromy operator) is constructed via Fourier analysis. In a rotating mass model, the dispersive decay bound is of the same type as for the constant mass model. However, in a model with a periodically alternating sign of the mass, the results are quite different.
 By stationary-phase analysis of the associated Fourier representation, we display initial data for which the $L^\infty_x$ time-decay rate are considerably slower:  $\mathcal{O}(t^{-1/3})$ or even $\mathcal{O}(t^{-1/5})$ as $t\to\infty$. 
\end{abstract}

\maketitle

\section{Introduction}

We study the dynamics of non-autonomous, time-periodically forced Dirac equations
\begin{subequations}
\label{eq:tperDirac}
\begin{align}
              i\partial_t \alpha (t,x) &= \left( i\sigma _3 \partial_x + \nu (t) \right) \alpha(t,x) \, ,  \label{eq:Dirac}\\
               \alpha(0,x) &= f \in L^2 (\R ;\C ^2 ) \, \label{eq:data} ,
\end{align}
\end{subequations}
Here $\nu(t)$ is a bounded  $T-$ periodic $2\times 2$ Hermitian 
 matrix-valued function, and $\sigma_3$ is the standard  Pauli matrix; see \eqref{eq:pauli}. Note that the $L^2(\mathbb R)$ norm is constant along solutions of 
 \eqref{eq:tperDirac}, i.e., \begin{equation}
\|\alpha(t,\cdot)\|_{L^2} = \|f\|_{L^2} \, ,
 \label{eq:L2}\end{equation}
for all $t\geq 0$. We investigate (for different choices of $\nu(t)$)  whether, in what sense, and at what rate, solutions of the initial value problem
\eqref{eq:tperDirac} decay as time advances.

The simplest cases are perhaps misleading: when $\nu(t)$ commutes with $\sigma_3\partial_x$, i.e., when $\nu(t)$ is diagonal, then
\begin{equation}
\alpha(t,\cdot) = e^{i\int_0^t\nu(s)ds}\ e^{\sigma_3 \partial_x t} \ f \, ,
\label{eq:factored}\end{equation}
and the components of $\alpha$ are right- and left- traveling waves, each multiplied  by a time-dependent phase. Each traveling wave propagates to infinity without distortion.
Clearly \eqref{eq:L2} still holds, but the solution does not exhibit {\it dispersive time-decay}, e.g. a decay of its $L^\infty(\mathbb R)$ norm as $t\to\infty$.\footnote{It is true, however, that for any spatially-localized initial data $f$, the amplitude tends to zero on any fixed compact set, due to the non-autonomous version of RAGE theorem \cite{enss1983bound}. This result, however, does not describe the rate of decay.} 

Our goal in this paper is to  study time-parametrically forced Dirac equations \eqref{eq:tperDirac} which exhibit dispersive decay and, in particular, to develop a quantitative understanding of the possible rates of decay. In view of the above example, {\it we focus on  cases where} 
\begin{equation} [\nu(t), i\partial_x \sigma _3] \neq 0 \, , \label{eq:nuOFt}\end{equation} 
for a non-negligible set of $t\in [0,T]$.
 Best known is the case 
$\nu(t)=m\sigma_1$, which models a relativistic particle of constant mass $m$. Noncommutativity of $\sigma_1$ and $\sigma_3$ forbids factorization as in \eqref{eq:factored}, but the initial-value problem can nevertheless be solved in this constant coefficient case by Fourier transform; if the initial conditions are sufficiently smooth, then  one has ``dispersive time-decay estimate''   $\|\alpha(x,t)\|_{L^\infty_x}\lesssim t^{-\frac12}$ \cite{Erdogan21}. 

{\it But what if $\nu(t)$ is \underline{non-constant} in time and does not commute with $\sigma_3$?} This class of models arises
in as the effective (homogenized) dynamics  of Floquet materials, an emergent and very active area in the fields of condensed matter physics \cite{cayssol2013floquet}, photonics \cite{ozawa2019topological}, and acoustics \cite{xue2022topological}; see the discussion below in Section \ref{sec:phys_mot}. An understanding of dispersive decay rates for this class of Hamiltonians, $\slashed{D}(t)$, appears to be open. 

The question of dispersive decay bounds in  {\em autonomous} Hamiltonian dynamics has been studied extensively, e.g. for time-independent Schr{\"o}dinger Hamiltonians \cite{jensen1979spectral, komech2010weighted, kopylova2014dispersion,schlag2005dispersive} and Dirac Hamiltonians \cite{d2005decay, burak2019limiting, erdougan2021massless, erdougan2019dispersive, erdougan2018dispersive, ERDOGAN2DMassive, green2024massless, kovavrik2022spectral, kraisler2023dispersive}. 
 Much less  is known for  {\em non-autonomous} Hamiltonians. All existing results, to the best of our knowledge, concern Schr{\"o}dinger equations in dimensions $d\geq 3$, and crucially all in the regime where the time-dependent term is a {\em perturbation of an autonomous Hamiltonian}, in some sense \cite{beceanu2011new,beceanu2012schrodinger, galtbayar2004local, goldberg2009strichartz, rodnianski2004time}. In these settings, the authors recover ``autonomous-like'' decay rates in non-autonomous settings.   See Sec.\ \ref{sec:lit} for a more detailed review. Such methods are not expected to work for those cases where $\nu(t)$ is not localized in space.

 We next introduce two solvable  models where $\slashed{D}(t+T)=\slashed{D}(t)$ and $\nu(t)$ is not spatially localized, for which we can obtain dispersive estimates. For one of which, the decay rates are substantially slower compared to its autonomous analogs.

\subsection{Models}\label{sec:models}  

\subsection*{Sign-switching mass}

Consider  $\nu(t)$ which ``switches'' discontinuously and periodically between positive and negative masses, i.e., $$\nu(t) = \begin{cases}
     m\sigma_1 \, ,& t\in \left[jT, (j+\frac12 )T \right)  , \\
     -m\sigma_1 \, , & t\in \left[(j+\frac12)T, (j+1)T\right) ,
\end{cases} \,, \forall j\in \Z \, , \qquad m,T>0 \, .$$
Theorem \ref{thm:maintheorem} shows that, in sharp contrast with the theory of autonomous Dirac operators, the time-decay is {\em at most} of rate $t^{-1/3}$. Furthermore, for special choices of the mass parameter $m>0$, the rate is exceptionally slow, at most $t^{-1/5}$ (Theorem~\ref{thm:15theorem}).
%

\subsection*{Complex rotation}

The second model we consider is a time-periodic ``rotating mass'':
\begin{equation}\nu(t) = m\begin{pmatrix}
    0 & e^{i\omega t}\\ e^{-i\omega t} & 0 
\end{pmatrix} = m\left[\cos(\omega t)\sigma_1 - \sin(\omega t)\sigma_2\right] ,\label{eq:rotation}
\end{equation}
where $m$ and $\omega$ are  real positive constants. 
The dynamics associated with \eqref{eq:rotation} can be mapped to the (non-rotating) massive Dirac equation (Theorem \ref{Thm:timeharmonic-prop}), which yields a $t^{-1/2}$ decay rate (Corollary \ref{cor:thdecay}), as in the constant mass Dirac equation \cite{Erdogan21}. Thus, here is an example of a time-dependent (and non-localized) $\nu(t)$, which satisfy the non-commutation condition \eqref{eq:nuOFt}, but exhibits an autonomous-like decay rate nonetheless.

Finally, we note that the case \eqref{eq:rotation} is a time-periodic variant  of an effective Hamiltonian, which was derived and studied in the study of defect modes in dislocated media  \cite{fefferman2014topologically,fefferman2017topologically,drouot2020defect,drouot2021bec}.

\subsection{Physical motivation}\label{sec:phys_mot}

Dirac equations were first introduced to provide a relativistic framework for quantum mechanics \cite{erdogan1782dirac, thaller2013dirac}. Relevant to this work is the fact that (autonomous) Dirac Hamiltonians also arise in study of periodic (crystalline) media. Specifically, as the {\it effective (homogenized) Hamiltonians} describing wave-packets in periodic structures that are spectrally concentrated near {\em Dirac points} -- linear (in 1D) or conical (in 2D)  degeneracies in the band structure, a phenomenon occuring in graphene and related  quantum/condensed-matter settings \cite{ castro2009electronic, ablowitz2009conical,  drouot2020defect, fefferman2014wave, fefferman2017topologically, fefferman2014topologically}.

Recently, there has been a significant experimental and theoretical progress in the study of {\it Floquet materials},  crystalline materials whose effective transport properties are controlled by time-periodic driving. Within the class of Floquet materials are 
 {\em Floquet topological insulators}, which exhibit changes in topological phase in response to appropriate time-forcing \cite{cayssol2013floquet, rudner2020band}. In such materials, non-autonomous Dirac equations are the appropriate low-energy/homogenized model \cite{ablowitz2017tight, bal2022multiscale, hameedi2023radiative, sagiv2022effective, sagiv2023near}. 
 In a class of physically relevant models,   the time-periodic driving is {\em uniform in space.}\footnote{ 
  An example of an experimental setting is the study of  electronic conductance  is in materials such as the hexagonal quantum material graphene. The spatial support of the time-periodic forcing corresponds to the finite region of the material, where an external laser beam drives its electrons \cite{mciver2020light, perez2014floquet, wang2013observation}. Our Hamiltonian reflects the modeling assumption that  uniform time-dependent forcing is applied to an area which is large compared with the material's lattice constant.}
   Thus, the natural models in the context of Floquet media are those where the time-periodic forcing  cannot be thought of as localized in space, as in \cite{beceanu2011new, beceanu2012schrodinger, galtbayar2004local, goldberg2009strichartz}. 
Finally, we remark that dispersive time-decay rates play a role in analysis of the metastability of bound states when subjected to parametric (periodic or more general) forcing. See, for example, \cite{soffer1998nonautonomous,hameedi2023radiative} for perturbative analyses of general models and \cite{borrelli2022complete, costin2008ionization,  costin2010ionization, costin2018nonperturbative, costin2001evolution} for non-perturbative studies in exactly solvable Schr{\"o}dinger time-periodic Hamiltonians.

\subsection{Broader discussion of literature on dispersive time-decay bounds }\label{sec:lit}

Dispersive decay estimates is a standard topic in the analysis of PDEs. The literature concerning {\em autonomous} (time-independent) Schr{\"odinger} or Dirac equations is extensive; see \cite{jensen1979spectral, komech2010weighted, kopylova2014dispersion,schlag2005dispersive} and the references therein. To the best of our knowledge, there has been no work on dispersive decay estimates in time-dependent Dirac equations, in any spatial dimension.

Time-decay estimates for  autonomous {\em Dirac} equations received extensive attention; see, for example, \cite{d2005decay, burak2019limiting, erdougan2021massless, erdougan2019dispersive, erdougan2018dispersive, ERDOGAN2DMassive, green2024massless, kovavrik2022spectral, kraisler2023dispersive}. 
Such estimates play a role in the weakly nonlinear scattering and stability theory of  {\em semilinear} Dirac equations \cite{boussaid2019nonlinear, pelinovsky2012asymptotic}.
 Most relevant to this work is the massive one-dimensional Dirac equation studied by Erdogan and Green \cite{Erdogan21}: denoting by $P$ the $L^2 (\R ;\C^2)$ projection onto the continuous spectral part of a massive Dirac operator with a rapidly decaying potential $D\equiv i\sigma_3\partial_x + m\sigma_1 +V(x)$, then
$$ \left\|e^{iDt}P \langle D\rangle^{-\frac32-\varepsilon} \right\|_{L^1 \to L^{\infty}} \lesssim t^{-\frac12} \, ,$$
for every $\varepsilon > 0$, where $\langle D\rangle^{-\frac12-\varepsilon}$ is a smoothing operator, defined via functional calculus. Furthermore, an improved $t^{-\frac32}$ holds in the generic case where no threshold resonances exist \cite{Erdogan21}.

There are significantly fewer results on time-decay bounds for {\em non-autonomous} dispersive equations. All concern  Schr{\"o}dinger Hamiltonians $H(t)=-\Delta + V(t,x)$, in dimensions $d\geq 3$, where the $H(t)$ is a small and spatially localized perturbation of a Schr\"odinger operator $H^0=\Delta+U(x)$ \cite{beceanu2011new, beceanu2012schrodinger, galtbayar2004local, goldberg2009strichartz, rodnianski2004time}. In settings where time-decay bounds for $\exp(-iH^0t)$ restricted to its continuous spectral part are known, persistence of these time-decay bounds is proved by perturbation theory. The analysis is based on a study of the {\em Floquet Hamiltonian} \cite{howland1979scattering}: $K\equiv i\partial_t - H(t)$, acting on functions of both space and time.
In our specific setting, we explicitly construct the monodromy operator  as a  Fourier integral (see, e.g. \eqref{eq:Mnf_integral}) and study it by oscillatory integral methods.
 Further, the perturbative approach  does not apply in our setting since our time-dependent perturbation is not spatially localized.

\subsection{Structure of the paper}

The main results (Theorems \ref{thm:maintheorem}, \ref{thm:15theorem}, and \ref{Thm:timeharmonic-prop}) are presented in Section \ref{sec:mainRes}, together with numerical observations and conjectures. We present key notations in Sec.\ \ref{sec:notation}. The proofs of Theorem \ref{thm:maintheorem} and Theorem \ref{thm:15theorem} are presented in Sec.\ \ref{sec:mainPf}, followed by the proof of Theorem \ref{Thm:timeharmonic-prop} in Sec.\ \ref{sec:thpf}.

\subsection*{Acknowledgements}
This research was supported in part by National Science Foundation grants 
DMS-1908657 and DMS-1937254 (MIW),  Simons Foundation Math + X Investigator Award \#376319 (MIW), and the Binational Science Foundation Research Grant \#2022254 (MIW, AS). AS would like to thank the Department of Applied Physics and Applied Mathematics at Columbia University for hosting him during the writing of this manuscript.

\section{Models, approaches, and main results}\label{sec:mainRes}

\subsection{The mass-switching model}

First, consider a switching-mass model for $\alpha(t,x)\in L^2(\mathbb R;\mathbb C)$ of the form
\begin{align}\label{eq:pwc_mdirac}
     i\partial_t \alpha = \left( i\sigma _3 \partial_x + \sigma_1 \nu (t) \right) \alpha \, , \qquad \nu(t) = \begin{cases}
     m \, ,& t\in \left[2j, 2j+1  \right)    \, , \\
     -m \, , & t\in \left[2j+1, 2j+2\right) \, ,
\end{cases} \,  j\in \Z \, ,
\end{align}
where $m>0$ denotes a  ``mass'' parameter. 
We denote by $\mathcal{U}(t)$  the solution operator for the dynamical system \eqref{eq:pwc_mdirac}. 

The Hamiltonian $H(t)=i\sigma _3 \partial_x + \sigma_1 \nu (t)$ is  periodic in $t$, and without loss of generality we take the period time to be $T=2$.

Hence, the dynamics are determined by dynamics by the {\it monodromy operator}, $M=\mathcal{U}(2)$,
 which maps data at $t=0$, $\alpha(0)=f\in L^2(\mathbb R;\mathbb C)$, to the solution $\alpha(2)=Mf\in L^2(\mathbb R;\mathbb C)$, at time $t=2$.
 
 Let $\mathcal{U}_+(t)$ denote the solution map for the IVP on the interval $0\le t<1$ and let 
  $\mathcal{U}_-(t)$ the solution map
   for the IVP on the interval $1\le t<2$.
    Then, \begin{equation}
    Mf \equiv \mathcal{U}(2)= \mathcal{U}_-(1)\mathcal{U}_+(1).
    \label{eq:Mdef-switch}
    \end{equation}
    Further, 
    $\mathcal{U}(2n) = M^n$ for all $ n\in \mathbb{Z}$
Note that $M$ is a unitary operator on $L^2(\R;\C^2)$. 

In Section \ref{app:monodromy-derivation} we use that $H(t)$ is invariant continuous translations in $x$ to  express $M$ via the Fourier transform:
\begin{align} \label{eq:monodromy}
    (Mf)(x) = \frac{1}{2\pi}\int_{\R}  P(\xi;m) \begin{pmatrix}
     e^{+2i\theta(\xi;m)}& 0 \\
     0&  e^{-2i \theta(\xi;m)}\end{pmatrix} P^* (\xi;m) \hat{f}(\xi)e^{i\xi x} \, d\xi \, .
\end{align}
Here, $\hat{f}$, the Fourier transform of $f\in L^2(\R;\C^2)$ is given by \eqref{eq:FourierDef}.
The expression \eqref{eq:monodromy} involves a phase function or ``dispersion relation'' $\theta(\xi)$ is given by
\begin{align}\label{eq:theta}
     \theta(\xi; m)=\arctan \left( \frac{\xi \sin (\omega (\xi))}{\sqrt{m^2 + \xi ^2 \cos^2(\omega(\xi)) }} \right)\, ,\qquad \omega(\xi;m) = \sqrt{m^2+\xi^2}\, 
\end{align}
and $P(\xi;m)$ a $2\times 2$ unitary matrix with entries displayed  in~\eqref{eq:Pmatrix-elements}. Thus, bounding $\|
\mathcal{U}(2n)\|_{L^1\to L^{\infty}}$ for $n\gg 1$ boils down to analyzing the rapidly oscillatory integral 
\begin{align}\label{eq:Mnf_integral}
(M^n f)(x) = \frac{1}{2\pi}\int_{\R} P(\xi;m) \begin{pmatrix}
     e^{+2 i n\theta(\xi;m) }& 0 \\
     0& e^{-2 i n\theta(\xi;m)}\end{pmatrix} P^* (\xi;m) \hat{f}(\xi)e^{i\xi x} \, d\xi \, .
\end{align}

\noindent

We study \eqref{eq:Mnf_integral} by  stationary phase methods for initial data $f$, where $\hat{f}$ is supported in a neighborhood of  momentum $\xi=0$. A key tool is 
van der Corput's Lemma (\Cref{lem:vandercorput}), which relates time-decay bounds  to lower bounds on derivatives of the phase function, $\theta(\xi,m)$. We shall see that the dependence of $\theta(\xi,m)$ on the mass  parameter $m$ is such that decay bounds for $M^nf$ depend on $m$. 
Let $\Sigma\subset(0,\infty)$ be the vanishing set of the third derivative of $\theta(0;m)$
\begin{align}\label{eq:SigmaDef}
    \Sigma \equiv \{ m\in (0,\infty)\ | \ \theta'''(0;m)=0\}\, .
\end{align}

Lemma \ref{lem:thetathirdder0} below states that the set  $\Sigma$ is discrete, and hence  the condition $m\notin \Sigma$ is  generic.
Our main result is that for $m\notin \Sigma$, the sharp dispersive decay rate is {\em at most} $t^{-1/3}$. Hence,  for this piecewise constant mass-switching model, sharp time-decay rates must be slower than those associated with the constant mass Dirac equation.

\begin{theorem}\label{thm:maintheorem}
    Consider the system \eqref{eq:pwc_mdirac} with the monodromy operator $M$, as in \eqref{eq:monodromy}, with $m\notin \Sigma$, see \eqref{eq:SigmaDef}. Let $f\in\mathcal{S}(\R;\C^2)$ be any function with Fourier transform supported in a sufficiently small ($m-$ dependent)  neighborhood of the origin. Then for all $n\geq 1$
    \begin{align}\label{eq:main_ubd}
        \| M^n f\|_{L^{\infty}} \lesssim \frac{1}{n^{1/3}}\|f\|_{L^{1}}\ .
    \end{align}
     Furthermore, for such initial data $f$, there exist $s_0, C \neq 0$ such that for $x_n \equiv ns_0$,
    \begin{align}\label{eq:main_exp}
         \vert(M^{n} f)(x_n)\vert =  C\begin{pmatrix}
    \int\limits_{\R} f_1(x) \, dx \\ \int\limits_{\R} f_2 (x) \, dx
\end{pmatrix} \frac{1}{n^{1/3}} + \mathcal{O}\left(\frac{1}{n^{2/3}}\right)  \, ,
    \end{align}
    where the leading error term  depends on $\partial_{\xi}\hat{f}(\xi=0)$. 
\end{theorem}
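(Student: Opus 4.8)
The plan is to analyze the oscillatory integral \eqref{eq:Mnf_integral} by stationary phase, exploiting the structure of the phase $\theta(\xi;m)$ near $\xi = 0$. First I would record the relevant Taylor data of $\theta$ at the origin: by the evenness/oddness properties of $\theta(\xi;m)$ (which should be odd in $\xi$, since $\theta(0;m)=0$ and the dispersion relation $\omega$ is even), we have $\theta(\xi;m) = \theta'(0;m)\,\xi + \tfrac16\theta'''(0;m)\,\xi^3 + O(\xi^5)$, with no quadratic term. The condition $m\notin\Sigma$ means $\theta'''(0;m)\neq 0$, so after subtracting the linear-in-$\xi$ drift (which merely translates the solution, and motivates the choice $x_n = n s_0$ with $s_0 = -2\theta'(0;m)$, or its negative) the phase $n\,[\,2\theta(\xi;m) + s_0\xi\,]$ behaves like $n\,\xi^3$ to leading order. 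Thus the relevant stationary point at $\xi = 0$ is a degenerate (third-order) critical point of the phase, which is exactly the situation governed by van der Corput's lemma with $k=3$, yielding the $n^{-1/3}$ rate in \eqref{eq:main_ubd}.

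For the upper bound \eqref{eq:main_ubd}, I would split $M^n f$ into the two scalar components coming from the diagonal matrix $\mathrm{diag}(e^{\pm 2in\theta}, \cdot)$, conjugated by $P(\xi;m)$ and $P^*(\xi;m)$. On the support of $\hat f$ (a small neighborhood of $0$), the amplitude $\xi\mapsto P(\xi;m) e^{i\xi x} P^*(\xi;m)\hat f(\xi)$ is smooth with bounded variation uniformly in $x$, and one has $\|\hat f\|_{L^\infty}\lesssim \|f\|_{L^1}$ together with a bound on $\|\partial_\xi\hat f\|_{L^1}$; since $f\in\mathcal S$ this total-variation control is clean. Because $m\notin\Sigma$, on a sufficiently small neighborhood of $0$ we have the uniform lower bound $|\theta'''(\xi;m)|\geq c>0$ (continuity of $\theta'''$ and nonvanishing at $0$), so \Cref{lem:vandercorput} with the third derivative gives $\left|\int e^{\pm 2in\theta(\xi;m)} a(\xi)\,d\xi\right| \lesssim n^{-1/3}(\|a\|_{L^\infty}+\|a'\|_{L^1})$ uniformly in $x$; note the extra phase $e^{i\xi x}$ from $(M^nf)(x)$ only contributes a linear term to the phase, which does not affect the third derivative, so the van der Corput bound holds with constants independent of $x$. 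This yields \eqref{eq:main_ubd} after summing the finitely many matrix entries.

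For the asymptotic expansion \eqref{eq:main_exp}, the idea is to evaluate at the specific point $x_n = n s_0$ that cancels the linear part of the phase, so that the phase in each component becomes $n\varphi(\xi)$ with $\varphi(\xi) = \tfrac{1}{6}\cdot 2\theta'''(0;m)\,\xi^3 + O(\xi^5)$, i.e. a pure cubic degeneracy at $\xi=0$. Then the standard stationary-phase asymptotics for a cubic critical point (equivalently, a change of variables $\xi\mapsto u$ with $\varphi(\xi) = c\,u^3$, reducing the integral to an Airy-type integral $\int e^{i n c u^3} g(u)\,du$) gives leading term proportional to $n^{-1/3}$ times $g(0)$, with the next correction of order $n^{-2/3}$ involving $g'(0)$, hence $\partial_\xi\hat f(0)$. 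The value $g(0)$ is, up to the universal Airy constant $C$, equal to $P(0;m)\,\mathrm{diag}(1,1)\,P^*(0;m)\,\hat f(0) = \hat f(0) = \bigl(\int f_1,\ \int f_2\bigr)^{\!\top}$ since $P(0;m)$ is unitary and the two diagonal phase blocks both contribute at $\xi=0$ with $\theta(0;m)=0$; this is what produces the clean form in \eqref{eq:main_exp}. I would be careful to note that ``$s_0\neq 0$'' requires $\theta'(0;m)\neq 0$, which should be checked from \eqref{eq:theta} (and is where the nondegeneracy enters), and that the constant $C\neq 0$ is the Airy/Gamma-function constant from the cubic stationary-phase formula.

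The main obstacle I anticipate is making the stationary-phase analysis genuinely uniform in the spatial variable $x$ for the upper bound, and simultaneously precise enough at $x=x_n$ for the expansion: one must track that the only $x$-dependence sits in the linear phase $e^{i\xi x}$ and in the smooth amplitude, confirm that the relevant derivative bounds on $\theta$ (the $n\notin\Sigma$ lower bound on $\theta'''$, plus control on $\theta''$ away from where it could help or hurt) hold uniformly on the chosen small neighborhood, and justify that contributions from the boundary of the support of $\hat f$ are genuinely negligible (they are $O(n^{-N})$ by non-stationary phase since $\theta'(\xi;m)$ stays close to $\theta'(0;m)\neq 0$ there, but the amplitude is not compactly supported in the interior — a smooth cutoff argument handles this). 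A secondary technical point is verifying $\theta'''(0;m)\neq 0$ is the correct genericity condition, i.e. that no higher cancellation forces a worse rate for $m\notin\Sigma$; this is exactly the content invoked via \eqref{eq:SigmaDef} and Lemma \ref{lem:thetathirdder0}, so I would simply quote it.
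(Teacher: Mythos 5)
Your overall strategy is the paper's: isolate the degenerate (cubic) stationary point of $\Theta(\xi,s)=\xi s+2\theta(\xi)$ at $\xi=0$, use $|\theta'''(\xi;m)|\ge c>0$ near the origin (from $m\notin\Sigma$ and continuity) together with van der Corput at order $k=3$ for the upper bound, and evaluate at $x_n=ns_0$ with $s_0=-2\theta'(0)$ to kill the linear phase and apply the Airy-type expansion for \eqref{eq:main_exp}; the identification of the leading coefficient via $P(0)P^*(0)=I$ and $\hat f(0)=(\int f_1,\int f_2)^\top$ is also exactly the paper's. The expansion half of your proposal is therefore essentially the paper's argument.

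There is, however, a genuine gap in how you run the upper bound. You apply van der Corput with the amplitude $a(\xi)=P(\xi)\,\mathrm{diag}(\cdot)\,P^*(\xi)\hat f(\xi)$ (after correctly moving $e^{i\xi x}$ into the phase). Van der Corput controls the integral by $n^{-1/3}\|a'\|_{L^1}$, and $\|\partial_\xi\hat f\|_{L^1}$ is \emph{not} bounded by $\|f\|_{L^1}$ (it is controlled by $\|xf\|_{L^1}$), so your argument yields $\|M^nf\|_\infty\lesssim n^{-1/3}\bigl(\|f\|_{L^1}+\|\partial_\xi\hat f\|_{L^1}\bigr)$ rather than the asserted \eqref{eq:main_ubd}. (Your earlier claim that the amplitude $P(\xi)e^{i\xi x}P^*(\xi)\hat f(\xi)$ has total variation ``uniformly bounded in $x$'' is also false as written, since $\partial_\xi e^{i\xi x}=ixe^{i\xi x}$; you do repair this later by putting $e^{i\xi x}$ into the phase, but the derivative of $\hat f$ remains.) The missing idea is the paper's Fubini/duality step (Lemma \ref{lem:decaywitoutP}): write $\hat\phi_\pm(\xi)=\int e^{-i\xi y}\phi_\pm(y)\,dy$ with $\phi_\pm=\widecheck{(P^\chi)^*\hat f}$, exchange the order of integration, and bound by
\begin{equation*}
\sup_{s\in\R}\Bigl|\int e^{in(\xi s+2\theta(\xi))}\chi(\xi)\,d\xi\Bigr|\;\|\phi_\pm\|_{L^1},
\end{equation*}
so that van der Corput is applied with the \emph{fixed} cutoff $\chi$ as amplitude (whose $\|\chi'\|_{L^1}$ is a harmless constant), and then Young's inequality gives $\|\phi_\pm\|_{L^1}\le\|\check P^\chi\|_{L^1}\|f\|_{L^1}$. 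Without this maneuver (or an equivalent one) you do not obtain the stated $\|f\|_{L^1}$ dependence, which is also what Remark \ref{rem:smoothed-bound} relies on.
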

The following result shows that for for mass parameter values in the discrete set $\Sigma$, an even slower rate of decay is attained  for the same collection of initial data.
\begin{theorem}\label{thm:15theorem}
    Assume the setup of Theorem \ref{thm:maintheorem}, but now with $m\in\Sigma$. Then for all $n\geq 1$
    \begin{align}\label{eq:15bound}
        \| M^n f\|_{L^{\infty}} \lesssim \frac{1}{n^{1/5}}\|f\|_{L^{1}}\ .
    \end{align}
     Furthermore, for such initial data $f$, there exist constants $C, s_1 \neq 0$ such that for $x_n \equiv ns_1 $ 
    \begin{align}\label{eq:15expand}
        \left| (M^{n} f)(x_n)\right| = C\begin{pmatrix}
    \int\limits_{\R} f_1(x) \, dx \\ \int\limits_{\R} f_2 (x) \, dx
\end{pmatrix}\frac{1}{n^{1/5}} + \mathcal{O}\left(\frac{1}{n^{2/5}}\right) \, .
    \end{align}
\end{theorem}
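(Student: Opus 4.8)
The plan is to run the same stationary‑phase scheme used for Theorem~\ref{thm:maintheorem}, the sole difference being that for $m\in\Sigma$ the relevant phase vanishes to one higher (even) order at the stationary point. Starting from \eqref{eq:Mnf_integral} and writing $\mathrm{diag}(e^{+2in\theta},e^{-2in\theta})=e^{+2in\theta}\,\mathrm{diag}(1,0)+e^{-2in\theta}\,\mathrm{diag}(0,1)$, I would split
\begin{equation*}
(M^nf)(x)=\frac{1}{2\pi}\sum_{\sigma=\pm}\int_{\R}\chi(\xi)\,A^{\sigma}(\xi;m)\,\hat f(\xi)\,e^{i(\sigma 2n\theta(\xi;m)+\xi x)}\,d\xi,
\end{equation*}
where $A^{+}=P\,\mathrm{diag}(1,0)\,P^{*}$, $A^{-}=P\,\mathrm{diag}(0,1)\,P^{*}$, and $\chi\in C_{c}^{\infty}(\R)$ equals $1$ on $\supp\hat f$ and is supported in the small, $m$-dependent neighborhood of $\xi=0$ on which the derivative bounds below hold (so $\chi\hat f=\hat f$).

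The single genuinely new ingredient is a Taylor‑expansion lemma for $\theta(\cdot\,;m)$ at $\xi=0$, in the spirit of Lemma~\ref{lem:thetathirdder0}: since $\omega(\xi;m)$ is even in $\xi$, $\theta(\xi;m)$ is odd in $\xi$, so $\theta''(0;m)=\theta^{(4)}(0;m)=0$ for every $m$; for $m\in\Sigma$ one additionally has $\theta'''(0;m)=0$; and the claim to establish is that $\theta^{(5)}(0;m)\neq0$ for all $m\in\Sigma$. Equivalently, writing $\theta(\xi;m)=a_{1}(m)\xi+a_{3}(m)\xi^{3}+a_{5}(m)\xi^{5}+\cdots$, the real‑analytic functions $a_{3}$ and $a_{5}$ have no common zero on $(0,\infty)$. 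I would prove this by computing $a_{1},a_{3},a_{5}$ explicitly from \eqref{eq:theta} — they are elementary functions of $m,\sin m,\cos m$ — and showing that $a_{3}(m)=0$ forces $a_{5}(m)\neq0$. \emph{This is the step I expect to be the main obstacle;} everything else is a bookkeeping variant of the $m\notin\Sigma$ argument with ``$3$'' replaced by ``$5$''. I would also record that $\theta'(0;m)=a_{1}(m)\neq0$ except at $m\in\pi\Z$; in the non‑generic case $a_{1}(m)=0$ the argument below still applies with both branches stationary, which only changes the matrix $C$.

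Granting the lemma, the upper bound \eqref{eq:15bound} follows from van der Corput's Lemma (\Cref{lem:vandercorput}). Writing $(M^nf)(x)=\sum_{\sigma}\int_{\R}J_{n}^{\sigma}(x-y)\,f(y)\,dy$ with $J_{n}^{\sigma}(z)=\frac{1}{2\pi}\int_{\R}\chi(\xi)\,A^{\sigma}(\xi;m)\,e^{i(\sigma 2n\theta(\xi;m)+\xi z)}\,d\xi$, the phase $\sigma 2\theta(\xi;m)+\xi z/n$ has fifth $\xi$-derivative $\sigma 2\theta^{(5)}(\xi;m)$ (the linear‑in‑$z$ term drops out), which — after possibly shrinking $\supp\chi$, using $\theta^{(5)}(0;m)\neq0$ and continuity — is bounded below in modulus by a fixed positive constant, uniformly in $z$ and $n$. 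Since the amplitude $\chi A^{\sigma}$ is a fixed $C_{c}^{\infty}$ matrix (independent of $f,z,n$), van der Corput with $k=5$ gives $\sup_{z}|J_{n}^{\sigma}(z)|\lesssim n^{-1/5}$, hence $\|M^nf\|_{L^{\infty}}\le\sum_{\sigma}\|J_{n}^{\sigma}\|_{L^{\infty}}\|f\|_{L^{1}}\lesssim n^{-1/5}\|f\|_{L^{1}}$.

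For the lower bound and the expansion \eqref{eq:15expand}, set $x_{n}=n s_{1}$ with $s_{1}:=-2\theta'(0;m)$, the same group velocity as in Theorem~\ref{thm:maintheorem}. The ``$-$'' phase $-2\theta(\xi;m)+s_{1}\xi$ has $\xi$-derivative $-2\theta'(\xi;m)-2\theta'(0;m)$, equal to $-4\theta'(0;m)\neq0$ at $\xi=0$ (generic case), hence non‑stationary on $\supp\chi$ after shrinking, so repeated integration by parts makes that branch $\mathcal{O}(n^{-N})$ for every $N$. The ``$+$'' phase is $\psi(\xi):=2\theta(\xi;m)+s_{1}\xi$, with $\psi(0)=\psi'(0)=\psi''(0)=\psi'''(0)=\psi^{(4)}(0)=0$ and $\psi^{(5)}(0)=2\theta^{(5)}(0;m)\neq0$, so $\psi(\xi)=\xi^{5}\rho(\xi^{2})$ for a smooth $\rho$ with $\rho(0)=\theta^{(5)}(0;m)/60\neq0$. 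The smooth change of variables $u=\xi\,\rho(\xi^{2})^{1/5}$ (a local diffeomorphism near $0$; cut $\chi$ to where it is valid and bound the remainder, on which $\psi'$ does not vanish, by $\mathcal{O}(n^{-N})$) turns the ``$+$'' term into $\frac{1}{2\pi}\int\tilde g(u)\,e^{inu^{5}}\,du$ with $\tilde g(0)=A^{+}(0;m)\,\hat f(0)\,\rho(0)^{-1/5}$. Using $\int_{\R}e^{inu^{5}}\,du=\tfrac{2}{5}\Gamma(\tfrac15)\cos(\tfrac{\pi}{10})\,n^{-1/5}$ and $\int_{\R}u\,e^{inu^{5}}\,du=\mathcal{O}(n^{-2/5})$, a first‑order Taylor expansion of $\tilde g$ about $u=0$ yields
\begin{equation*}
(M^nf)(x_{n})=C\begin{pmatrix}\int_{\R}f_{1}(x)\,dx\\[2pt]\int_{\R}f_{2}(x)\,dx\end{pmatrix}\frac{1}{n^{1/5}}+\mathcal{O}\!\left(\frac{1}{n^{2/5}}\right),\qquad C=\frac{\Gamma(1/5)\cos(\pi/10)}{5\pi}\,\rho(0)^{-1/5}\,A^{+}(0;m),
\end{equation*}
using $\hat f(0)=(\int_{\R}f_{1},\int_{\R}f_{2})^{\top}$; here $C\neq0$ since $A^{+}(0;m)=P(0;m)\,\mathrm{diag}(1,0)\,P^{*}(0;m)$ is a nonzero rank‑one matrix and $\rho(0)\in\R\setminus\{0\}$, and (as in Theorem~\ref{thm:maintheorem}) the $\mathcal{O}(n^{-2/5})$ coefficient depends on $\partial_{\xi}\hat f(0)$. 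Taking absolute values componentwise gives \eqref{eq:15expand}. The only real technical point is the restricted validity of the change of variables, handled by the cut‑off just described.
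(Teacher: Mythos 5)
Your proposal follows essentially the same route as the paper: van der Corput with $k=5$ for the upper bound, and a degenerate (quintic) stationary-phase expansion at $x_n=ns_1$, $s_1=-2\theta'(0;m)$, for \eqref{eq:15expand}. Your change of variables $u=\xi\,\rho(\xi^2)^{1/5}$ reproduces the content of Lemma~\ref{lem:Airyexpansion}(2), and your constant agrees with the paper's since $\sin(2\pi/5)=\cos(\pi/10)$. Two of your worries are non-issues: the case $\theta'(0;m)=0$ never occurs on $\Sigma$ (at $m\in\pi\Z$ the bracket in \eqref{eq:thetathirdder0} equals $\pm3m\neq0$), so $s_1\neq0$ automatically; and your isolation of the single stationary branch, giving the rank-one amplitude $A^{+}(0;m)$, is if anything more careful than the paper's bookkeeping in Section~\ref{sec:exp_pf}.

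The one genuine gap is exactly the step you flag: the nonvanishing of $\theta^{(5)}(0;m)$ for all $m\in\Sigma$. Your proposed strategy --- compute $a_3,a_5$ explicitly and show algebraically that $a_3(m)=0$ forces $a_5(m)\neq0$ --- is not carried out, and it is worth knowing that the paper does not manage a purely algebraic elimination either. The explicit formulas are those of Lemma~\ref{lem:thetader0}; from them the paper derives only the asymptotic statement \eqref{eq:theta5_asym}, $m_k^3\,\theta^{(5)}(0;m_k)=(-1)^{k+k_0+1}\cdot15+\mathcal{O}(k^{-2})$, which settles the claim for all sufficiently large $m_k\in\Sigma$, and then disposes of the remaining finitely many exceptional masses numerically (Table~\ref{tab:theta5}, Figure~\ref{fig:fifth_derivative_at_masses}). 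To complete your proof you must either execute the elimination between the two trigonometric expressions for $\theta'''(0;m)$ and $\theta^{(5)}(0;m)$ --- which does not appear to be straightforward --- or adopt the paper's asymptotics-plus-finite-check argument; as written, your theorem is only established modulo this hypothesis. Everything else in your write-up is a correct bookkeeping variant of the $m\notin\Sigma$ case.
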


\begin{remark}\label{rem:smoothed-bound}
 Since the initial conditions  $f$  appearing in Theorems \ref{thm:maintheorem} and \ref{thm:15theorem} are in Schwartz class, the time-decay bounds \eqref{eq:main_ubd} and \eqref{eq:15bound} hold with $M^n$ replaced by $M^n\langle\partial_x\rangle^{-r}$ for any $r\ge0$. Thus, any general dispersive time-decay bound must have a rate slower or equal to $t^{-1/3}$ and $t^{-1/5}$, depending on the mass parameter.
\end{remark}

\subsection{Numerical observations and conjectures}\label{sec:num}

Consider an oscillatory integral such as \eqref{eq:Mnf_integral} which depends on a parameter $n$.  The behavior of the phase function impacts the  decay of the integral as $n$ tends to infinity. If the phase function is linear, then the Riemann-Lebesgue Lemma ensures that the integral tends to zero as $n$ tends to infinity, with however no information on the rate of decay. On the other hand,  if the phase has critical points, or at distinguished points   derivatives of the phase of higher order vanish, then one can apply the method of stationary phase, or more generally Van der Corput's Lemma \ref{lem:vandercorput} and obtain a  quantitative information on the decay rate. 
The $t^{-1/3}$ or $t^{-1/5}$ time-decay rates for a generic and exceptional masses, respectively, are due to the existence of an inflection point in the dispersion relation $\theta(\xi)$, see \eqref{eq:theta}. It is straightforward to show that $\theta '' (0;m)=\theta ^{(4)}(0;m)=0$ for all $m$ values (Appendix \ref{app:thetaderivatives}). Additionally, Fig.\ \ref{fig:theta3at0} shows that $\theta '''(0;m)=0$ for a discrete set of ``exceptional'' $m$ values, an assertion that can be verified by using the continuity of $\theta '''(0,m)$.
 \begin{figure}[h]
     \centering
     \begin{subfigure}{0.45\textwidth}
        \includegraphics[width=\linewidth]{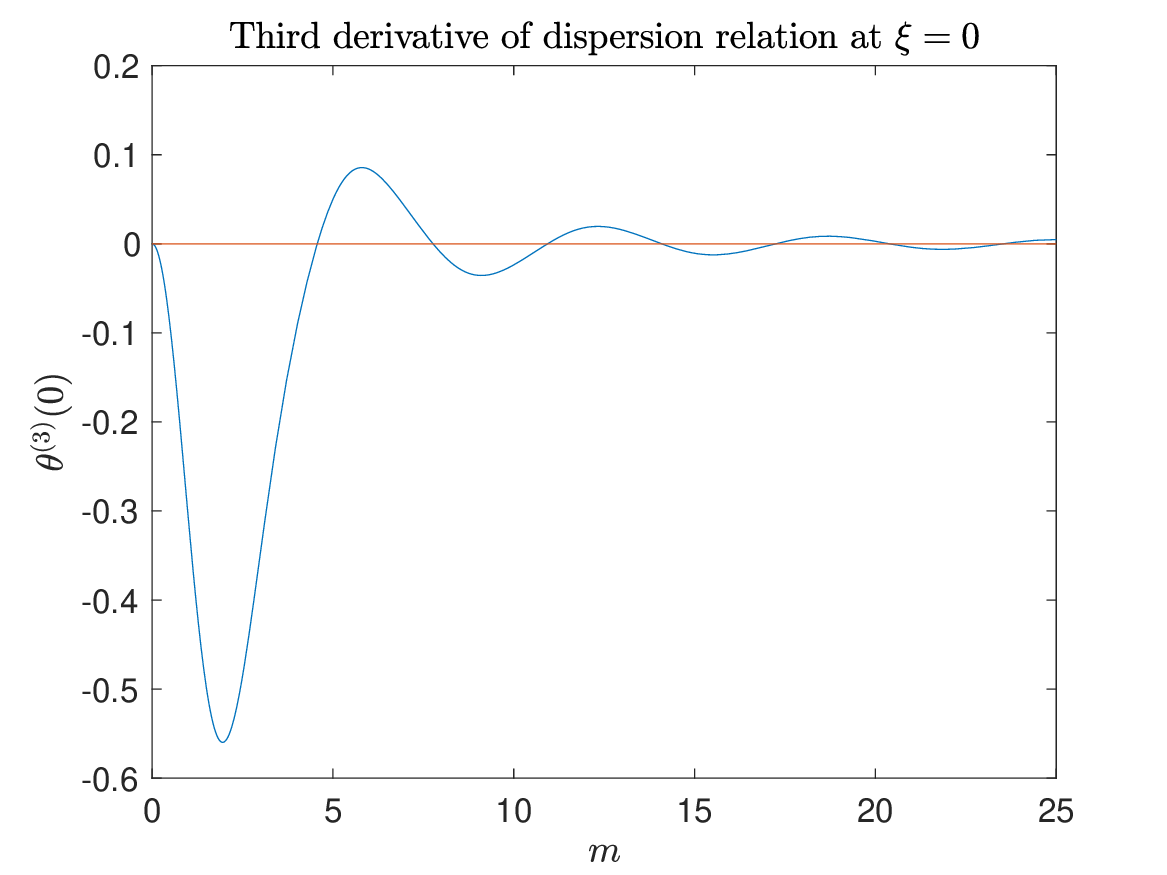}
        \caption{}
     \label{fig:theta3at0}     
    \end{subfigure}
    \hfill
    \begin{subfigure}{0.45\textwidth}
        \includegraphics[width=\linewidth]{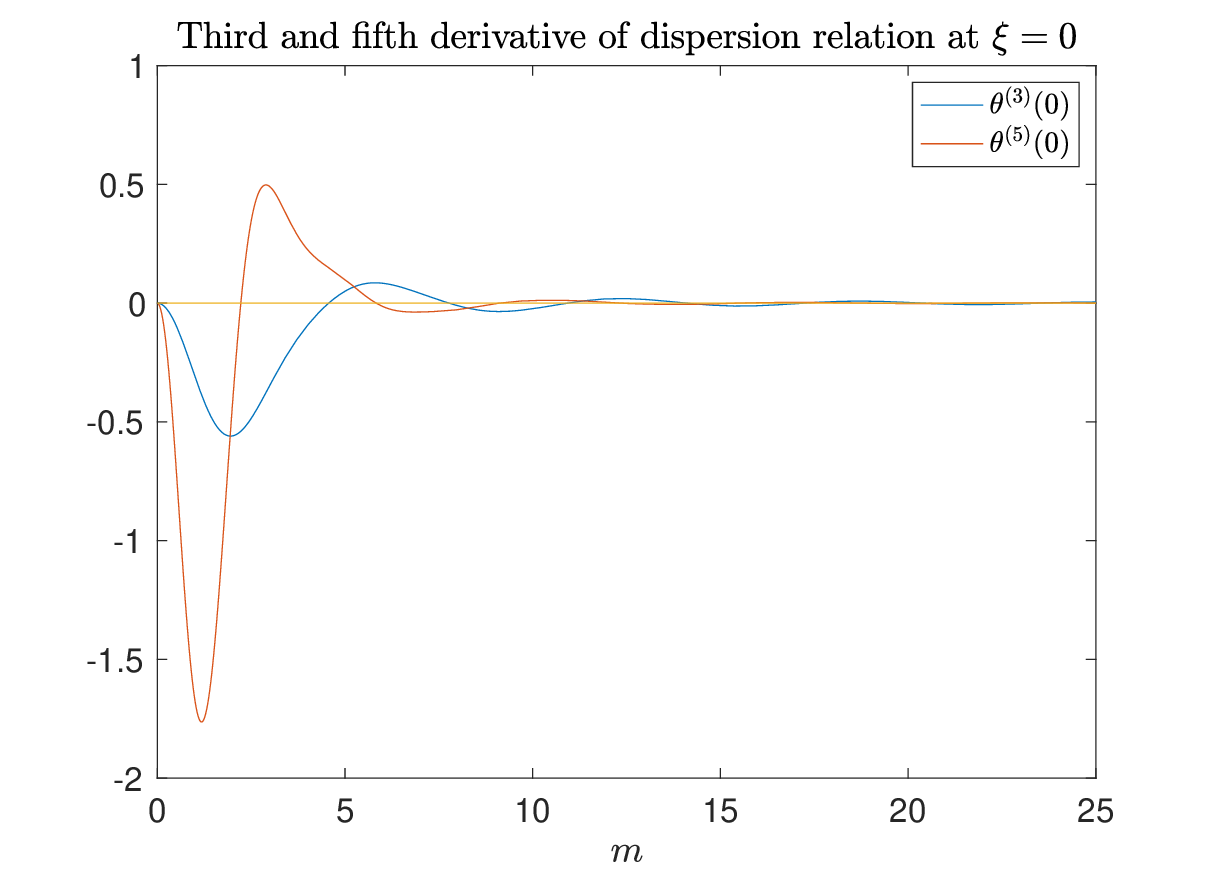}
        \caption{}
     \label{fig:theta3and5at0}     
    \end{subfigure}
    \caption{({\bf A}) $\theta ^{(3)}(0)$ as a function of $m>0$, where the dispersion relation $\theta (\xi)$ is given in \eqref{eq:theta}. ({\bf B}) $\theta ^{(3)}(0)$ and $\theta^{(5)}(0)$ overlaid.}  
 \end{figure}

Are there similar inflection points for larger values of $\xi$? Numerically, Fig.~\ref{fig:largeXi} demonstrates that when $m=1$ (a generic case, since $1\notin\Sigma$), there is a discrete sequence of Fourier-momenta $\{\xi _{l}\}$, tending to infinity, such that $\theta''(\xi_{l};1)=0$ and  $\theta '''(\xi_{l};1) \sim  \xi_l^{-2}\to0$ as $l\to\infty$. Then, assuming the observed decay of $\theta ''' (\xi_l,1)$, Van der Corput's Lemma implies   for data $f$ whose Fourier transform is localized near $\xi_l$, one has $\|M^n f\|_{\infty}\lesssim (\xi_l^{-2}n)^{-1/3}\|f\|_1$. Thus, for {\em general } $L^1 $ data, we conjecture:
\begin{figure}[h]
    \centering
    \begin{subfigure}{0.45\textwidth}
        \includegraphics[width=\linewidth]{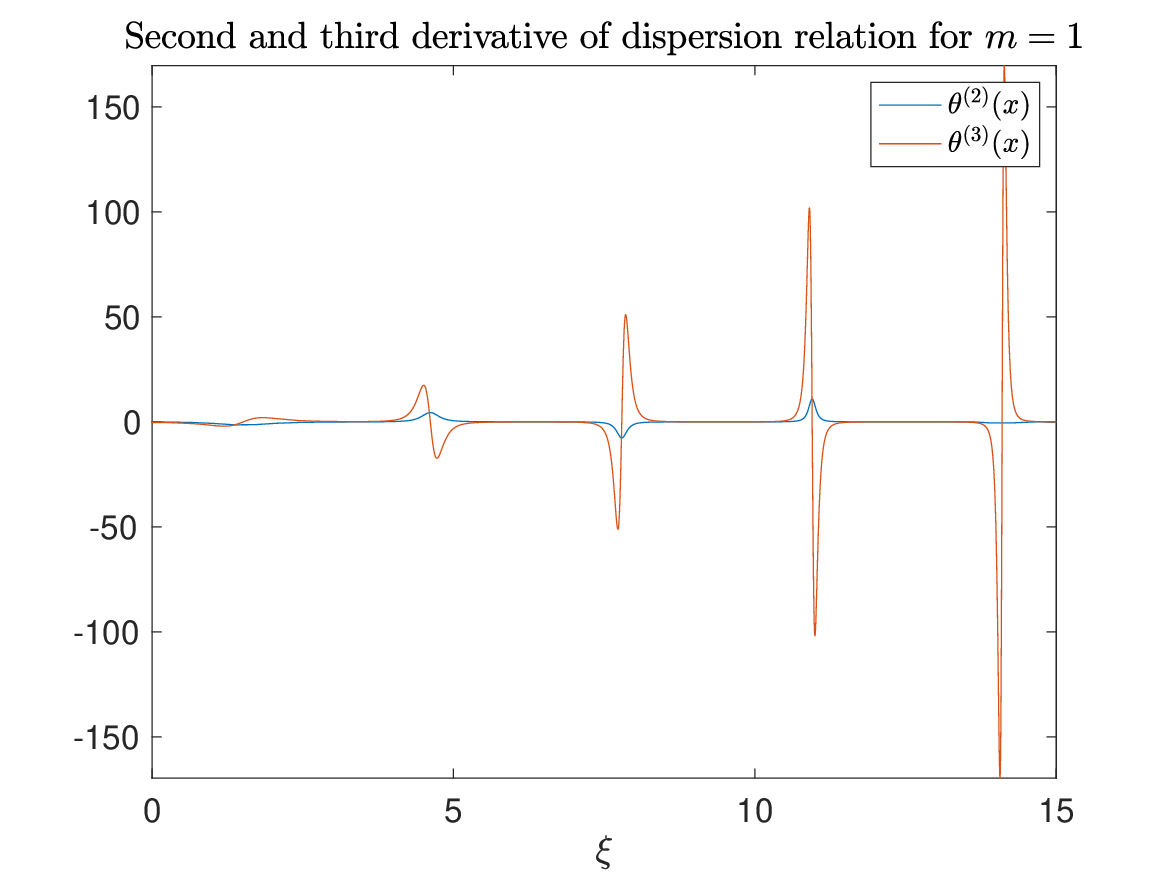}
        \caption{}
        \label{fig:subfig1}
    \end{subfigure}
    \hfill
    \begin{subfigure}{0.45\textwidth}
        \includegraphics[width=\linewidth]{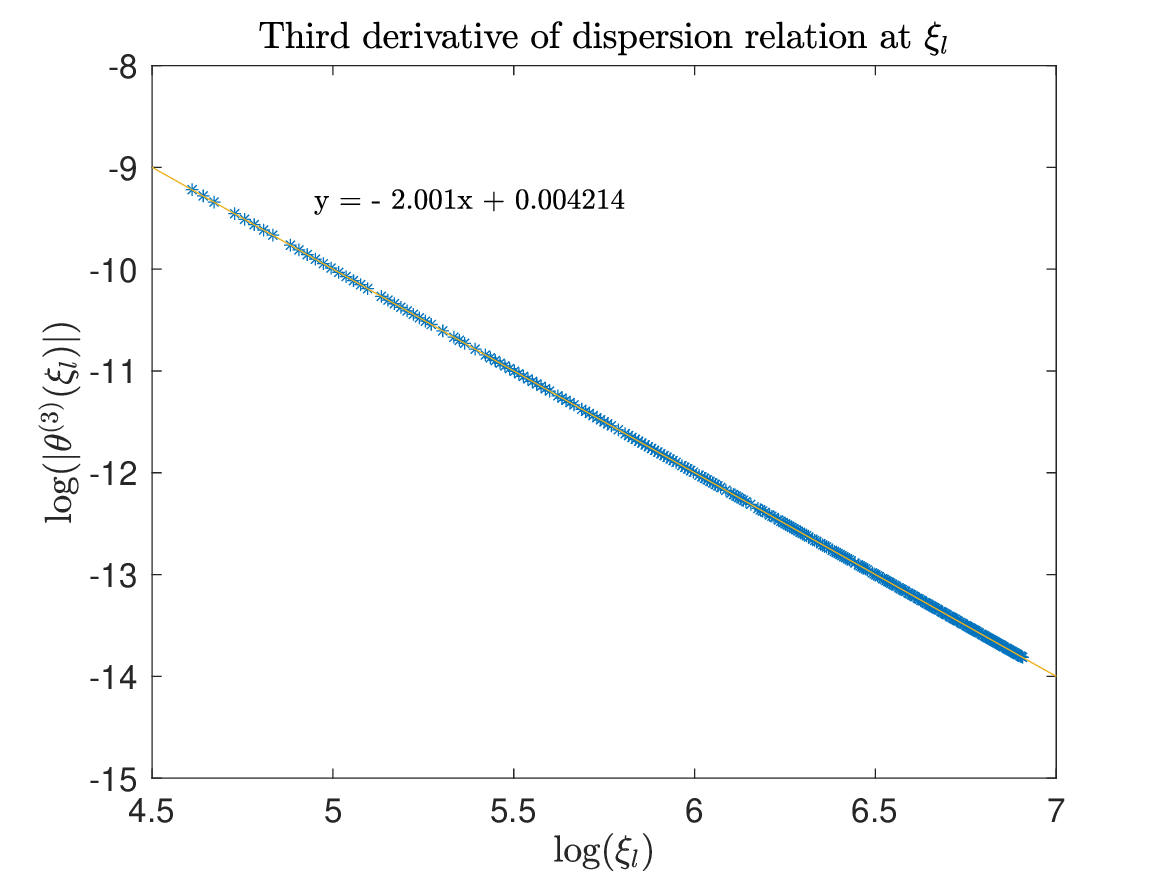}
        \caption{}
        \label{fig:subfig2}
    \end{subfigure}
    \caption{The dispersion relation $\theta (\xi)$, see \eqref{eq:theta}, for $m=1$. {\bf (A)} $\theta''(\xi)$ (blue) and $\theta '''(\xi)$ (orange). Each has an increasing sequence of zeroes. {\bf (B)} Denoting the zeroes of $\theta ''$ as $(\xi_l)_{l=1}^{\infty}$, we plot $\theta '''(\xi_l)$ (blue, stars) on a log-log grid, and a polynomial fit (orange, solid) which yields that $|\theta '''(\xi_l)|\lesssim \xi_l ^{-2}$.}
    \label{fig:largeXi}
\end{figure}

\begin{conjecture}\label{conjecture}
    Consider the Dirac equation \eqref{eq:pwc_mdirac} with $m=1$. Then for every $\varepsilon>0$ there exists $C_{\varepsilon}>0$ such that\footnote{The additional $\varepsilon>0$ smoothing arises in a dyadic partition argument; see for example, Section \ref{sec:thdecaypf}.}
    $$ \left\| M^n \langle\partial_x\rangle^{-(2/3+\varepsilon)} \right\|_{L^1\to L^{\infty}} \leq C_{\varepsilon}n ^{-\frac13}  \, .$$
\end{conjecture}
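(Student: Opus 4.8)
The plan is to reduce the operator bound to a family of oscillatory integral estimates, one for each dyadic momentum shell, and to control the shells using the numerically observed decay $|\theta'''(\xi_l)|\lesssim \xi_l^{-2}$ together with van der Corput's Lemma. First I would decompose the identity on Fourier side via a smooth dyadic partition of unity $1=\psi_0(\xi)+\sum_{j\geq1}\psi_j(\xi)$, where $\psi_j$ is supported in $|\xi|\sim 2^j$. For the low-frequency piece $\psi_0$, the analysis near $\xi=0$ from the proof of Theorem \ref{thm:maintheorem} applies: since $m=1\notin\Sigma$, the phase $n\theta(\xi;1)$ has $\theta''(0)=0$, $\theta'''(0)\neq0$, and van der Corput (third-derivative case) gives a contribution bounded by $Cn^{-1/3}\|f\|_{L^1}$, with the matrix conjugation by $P(\xi;1)$ and the translation factor $e^{i\xi x}$ harmlessly absorbed into the amplitude since $P$ and its derivatives are uniformly bounded on compact sets.

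\smallskip

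For the high-frequency shell $j\geq1$, the strategy is to cover the support of $\psi_j$ by finitely many subintervals, separating a neighborhood of each inflection point $\xi_l$ lying in $|\xi|\sim 2^j$ from the complementary region where $|\theta''(\xi)|$ is bounded below. On the complementary region one applies the second-derivative van der Corput bound, which costs $n^{-1/2}$ times a constant depending on the lower bound for $|\theta''|$; since $n^{-1/2}\le n^{-1/3}$ this is acceptable provided the sum over $j$ and over the subintervals converges, which one arranges by exploiting the smoothing factor $\langle\partial_x\rangle^{-(2/3+\varepsilon)}\sim 2^{-j(2/3+\varepsilon)}$ on the $j$-th shell. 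Near each inflection point $\xi_l\sim 2^j$ one has $\theta''(\xi_l)=0$ and $|\theta'''(\xi_l)|\lesssim \xi_l^{-2}\sim 2^{-2j}$, so the third-derivative van der Corput bound on a small interval around $\xi_l$ gives a contribution of order $(2^{-2j}n)^{-1/3}=2^{2j/3}n^{-1/3}$; multiplying by the shell's smoothing weight $2^{-j(2/3+\varepsilon)}$ yields $2^{-j\varepsilon}n^{-1/3}$, and summing the geometric series in $j$ produces the claimed bound $C_\varepsilon n^{-1/3}$.

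\smallskip

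Several points need care. One must control the \emph{number} of inflection points $\xi_l$ in each dyadic shell (one expects $O(1)$ per shell, consistent with Fig.\ \ref{fig:largeXi}, but this should be established, e.g.\ from an asymptotic expansion of $\theta(\xi;1)$ as $\xi\to\infty$, where $\omega(\xi)=\sqrt{1+\xi^2}\sim\xi$ and $\theta(\xi)$ becomes nearly periodic in $\xi$). One must also verify that away from the $\xi_l$ the lower bound on $|\theta''|$ does not degenerate too fast relative to the available smoothing, and handle the amplitude: the matrix $P(\xi;1)P^*(\xi;1)=\mathrm{Id}$, but the off-diagonal structure means the relevant scalar oscillatory integrals carry amplitudes built from products of entries of $P(\xi;1)$ and their $\xi$-derivatives, which must be shown to be bounded with bounded total variation on each shell uniformly in $j$ — plausible since $P$ depends on $\xi$ through $\omega(\xi)$ and trigonometric functions thereof. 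Finally, the translation by $x$ only shifts the phase by the \emph{linear} term $\xi x$, which does not affect second or third derivatives, so the van der Corput constants are uniform in $x$, giving the $L^\infty_x$ bound.

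\smallskip

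\textbf{Main obstacle.} The crux is making the heuristic $|\theta'''(\xi_l)|\lesssim \xi_l^{-2}$ rigorous and simultaneously controlling, uniformly in $l$, both the size of the intervals around $\xi_l$ on which the third-derivative bound is used and the lower bound for $|\theta''|$ on the complement. Concretely, one needs quantitative asymptotics for $\theta''$ and $\theta'''$ as $\xi\to\infty$ — not merely the leading order of $\theta'''$ at its zeros — so that the van der Corput constants can be tracked explicitly and the dyadic sum closed. This is the step I expect to be genuinely technical, in contrast to the $\xi$ near $0$ analysis which is already essentially done in Theorem \ref{thm:maintheorem}; it is presumably why the statement is posed as a conjecture rather than a theorem.
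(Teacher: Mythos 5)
You are attempting to prove a statement that the paper itself leaves as a \emph{conjecture}: the paper offers no proof, only the numerical evidence of Fig.~\ref{fig:largeXi} and a one-line heuristic in Section~\ref{sec:num}, and your proposal is essentially that heuristic expanded into a dyadic scheme modeled on Section~\ref{sec:thdecaypf}. You are candid that the asymptotics of $\theta''$ and $\theta'''$ as $\xi\to\infty$ are the missing ingredient, and that is indeed part of why the statement is conjectural. But there are two concrete points where the scheme, as written, would fail even granting those asymptotics. First, van der Corput's Lemma (\Cref{lem:vandercorput}) requires a \emph{lower} bound $|\theta'''(\xi)|\geq\lambda_0$ on each interval where the third-derivative case is invoked; the numerically observed relation is the upper bound $|\theta'''(\xi_l)|\lesssim\xi_l^{-2}$, so what you actually need (and would have to prove) is a two-sided estimate $|\theta'''|\gtrsim\xi_l^{-2}$ on a quantified neighborhood of each $\xi_l$, together with a quantified lower bound for $|\theta''|$ on the complement of these neighborhoods. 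Writing ``$\lesssim$'' where the argument consumes ``$\gtrsim$'' is not a typographical slip one can wave away, because the whole point of the exceptional-mass analysis (Theorem~\ref{thm:15theorem}) is that such lower bounds can fail.

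Second, and more seriously, your bookkeeping of the dyadic sum does not close. Since $\theta$ depends on $\xi$ through trigonometric functions of $\omega(\xi)=\sqrt{1+\xi^2}\approx\xi$, the zeros $\xi_l$ of $\theta''$ are asymptotically \emph{equally spaced}, so the shell $|\xi|\sim 2^j$ contains $O(2^j)$ inflection points, not $O(1)$ as you posit. Summing the third-derivative van der Corput bound $(2^{-2j}n)^{-1/3}=2^{2j/3}n^{-1/3}$ over $O(2^j)$ subintervals gives $2^{5j/3}n^{-1/3}$ per shell, and after the smoothing weight $2^{-j(2/3+\varepsilon)}$ the series $\sum_j 2^{j(1-\varepsilon)}n^{-1/3}$ diverges. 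To rescue the argument one would have to show that for each fixed $s=x/n$ only $O(1)$ of the inflection points in a shell are near-stationary for the full phase $\xi s+2\theta(\xi)$, and dispose of the remaining $O(2^j)$ by first-derivative (non-stationary phase) estimates using $|s+2\theta'(\xi)|$ — a genuinely different and more delicate argument than blunt van der Corput on every subinterval, requiring precise asymptotics of $\theta'$ as well. This, together with the two-sided derivative bounds above, is the real content separating the conjecture from a theorem.
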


\subsection{The rotating-mass model}

Consider the Dirac equation
\begin{subequations}\label{eq:timeharmonic-H}
\begin{align} 
    i\partial_t \phi &= \cancel{D}_{\omega}(x)\phi =  (i\sigma_3\partial_x + \nu_{\omega}(t))\phi\, ,\quad t>0\, , x\in\R\, ,
\end{align}
where
\begin{align}
    \nu_{\omega}(t) \equiv m\begin{pmatrix}
        0 & e^{i\omega t}\\
        e^{-i\omega t} & 0
    \end{pmatrix} = m\left[ \cos(\omega t)\sigma_1 - \sin(\omega t)\sigma_2 \right] \, .
\end{align}
\end{subequations}

Let $u\mapsto\mathcal{U}_{\omega}(t)u$ denote the (unitary) time evolution operator associated to the dynamics \Cref{eq:timeharmonic-H}.
Our main technical result is that $\mathcal{U}_{\omega}(t)$ can be expressed in terms of the constant-mass Dirac time-evolution:
\begin{theorem}\label{Thm:timeharmonic-prop}
$\mathcal{U}_{\omega}(t)$ has the following Fourier integral representation:
    \begin{align}\label{eq:fourier-rep}
        \mathcal{U}_{\omega}(t)u(x) = \frac{1}{2\pi} \int_{\R}e^{i\xi x}e^{it\omega\sigma_3/2}e^{-i\cancel{D}_0(\xi+\omega/2)t}\hat{u}(\xi)d\xi\, .
    \end{align}
   Here,  $\cancel{D}_{0}(\xi)=(\xi\sigma_3 + m\sigma_1)$ is the symbol of the constant mass operator, and 
\begin{align*}
     e^{it\omega\sigma_3/2} = \begin{pmatrix}
        e^{+i\omega t/2} & 0 \\ 0 & e^{-i\omega t/2}
    \end{pmatrix}\, .
\end{align*}
\end{theorem}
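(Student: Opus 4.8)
The plan is to pass to the Fourier side, where the problem becomes a system of ODEs in the time variable parametrized by the momentum $\xi$, and then remove the time-dependence of the coefficients by conjugating with a suitable rotation. First I would write $\phi(t,x) = \frac{1}{2\pi}\int_\R e^{i\xi x}\hat\phi(t,\xi)\,d\xi$; substituting into \eqref{eq:timeharmonic-H} and using that $i\sigma_3\partial_x$ acts as multiplication by $-\xi\sigma_3$ in Fourier space (with the sign convention fixed by \eqref{eq:FourierDef}), the PDE becomes the $\xi$-parametrized linear ODE $i\partial_t\hat\phi(t,\xi) = \bigl(-\xi\sigma_3 + \nu_\omega(t)\bigr)\hat\phi(t,\xi)$ (up to the sign convention, which I will track carefully so that it matches the final statement). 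The time-dependence of $\nu_\omega(t)$ is purely a rotation in the off-diagonal plane, so the natural move is to set $\hat\phi(t,\xi) = e^{-it\omega\sigma_3/2}\,\psi(t,\xi)$, i.e.\ to conjugate out that rotation.

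The key computation is then to verify that $\psi$ solves an \emph{autonomous} ODE. Differentiating the ansatz and using $i\partial_t e^{-it\omega\sigma_3/2} = \tfrac{\omega}{2}\sigma_3 e^{-it\omega\sigma_3/2}$, one gets $i\partial_t\psi = \bigl(e^{it\omega\sigma_3/2}(-\xi\sigma_3+\nu_\omega(t))e^{-it\omega\sigma_3/2} - \tfrac{\omega}{2}\sigma_3\bigr)\psi$. The point is that $e^{it\omega\sigma_3/2}$ commutes with $\sigma_3$ and conjugates the rotating mass term $\nu_\omega(t)$ back to the \emph{constant} mass term $m\sigma_1$: explicitly, $e^{it\omega\sigma_3/2}\sigma_1 e^{-it\omega\sigma_3/2} = \cos(\omega t)\sigma_1 - \sin(\omega t)\sigma_2$ (and similarly for $\sigma_2$), so the conjugation of $\nu_\omega(t)$ exactly undoes the time-dependent rotation and yields $m\sigma_1$. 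Hence $i\partial_t\psi = \bigl(-\xi\sigma_3 + m\sigma_1 - \tfrac{\omega}{2}\sigma_3\bigr)\psi = -\bigl((\xi+\tfrac{\omega}{2})\sigma_3 - m\sigma_1\bigr)\psi = -\cancel{D}_0(\xi+\omega/2)\psi$, an autonomous system. With the initial condition $\psi(0,\xi)=\hat\phi(0,\xi)=\hat u(\xi)$ (since $e^{0}=I$), this integrates to $\psi(t,\xi) = e^{-i\cancel{D}_0(\xi+\omega/2)t}\hat u(\xi)$, and unwinding the substitution gives $\hat\phi(t,\xi) = e^{-it\omega\sigma_3/2}e^{-i\cancel{D}_0(\xi+\omega/2)t}\hat u(\xi)$.

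Inverting the Fourier transform then yields the claimed representation \eqref{eq:fourier-rep}, up to reconciling the sign of the $\sigma_3$-rotation prefactor; here I would be careful about whether the conjugation should be $e^{-it\omega\sigma_3/2}$ or $e^{+it\omega\sigma_3/2}$, choosing the one that makes $e^{it\omega\sigma_3/2}\nu_\omega(t)e^{-it\omega\sigma_3/2}=m\sigma_1$ hold, which also dictates whether the prefactor appearing outside in \eqref{eq:fourier-rep} is $e^{+it\omega\sigma_3/2}$ as written. (A quick cross-check: setting $\omega=0$ must recover the constant-mass evolution $e^{-i\cancel{D}_0(\xi)t}$, which it does.) To make this fully rigorous rather than formal, I would note that for each fixed $\xi$ the ODE has bounded (for the conjugated system, constant) coefficients, so existence, uniqueness, and unitarity of the propagator are standard; the only genuine care needed is the functional-analytic justification that the Fourier-side construction indeed produces \emph{the} strongly continuous unitary evolution family $\mathcal{U}_\omega(t)$ on $L^2(\R;\C^2)$ solving \eqref{eq:timeharmonic-H}. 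This is routine (one can invoke the uniqueness of solutions to the abstract Cauchy problem, or simply verify directly that the right-hand side of \eqref{eq:fourier-rep} is unitary, strongly continuous in $t$, equals the identity at $t=0$, and differentiates to solve the equation on a dense domain), so I do not anticipate a serious obstacle; the main thing to get exactly right is the bookkeeping of signs and conventions so that the final formula matches \eqref{eq:fourier-rep} verbatim.
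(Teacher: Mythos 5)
Your strategy is correct and is genuinely different from the paper's. The paper applies the Fourier transform in $x$ \emph{and} the Laplace transform in $t$, solves the resulting algebraic system for $\Phi_1,\Phi_2$ explicitly (exploiting the shifts $s\mapsto s\pm i\omega$ produced by the factors $e^{\pm i\omega t}$), inverts both transforms to obtain the explicit propagator matrix $U(\xi,t)$, and only then recognizes it as $e^{it\omega\sigma_3/2}e^{-i\cancel{D}_0(\xi+\omega/2)t}$. You stay on the Fourier side in $x$ only and pass to a rotating frame, conjugating the $\xi$-parametrized ODE by $e^{\pm it\omega\sigma_3/2}$ so that the rotating mass becomes the constant mass and $\xi$ is shifted by $\omega/2$; the system is then autonomous and integrates to a matrix exponential. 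Your route is shorter and more conceptual --- it explains \emph{why} the answer is a conjugated, momentum-shifted constant-mass propagator rather than discovering this post hoc --- while the paper's route produces the explicit entries of $U(\xi,t)$ along the way; both end at the same formula. One bookkeeping point that you flagged but resolved with the wrong sign in your displayed computation: the identity you actually need is $e^{-it\omega\sigma_3/2}\,\nu_\omega(t)\,e^{+it\omega\sigma_3/2}=m\sigma_1$; conjugating in the direction you wrote gives $m\left[\cos(2\omega t)\sigma_1-\sin(2\omega t)\sigma_2\right]$, i.e.\ it doubles the rotation rather than cancelling it. Hence the correct ansatz is $\hat\phi(t,\xi)=e^{+it\omega\sigma_3/2}\psi(t,\xi)$, which also makes the prefactor $e^{+it\omega\sigma_3/2}$ in \eqref{eq:fourier-rep} appear directly. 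With that choice, and taking the symbol of $i\sigma_3\partial_x$ to be $+\xi\sigma_3$ (the convention the paper effectively uses in its Fourier--Laplace system \eqref{eq:FL1}--\eqref{eq:FL2}, notwithstanding \eqref{eq:FourierDef}), one gets $i\partial_t\psi=\cancel{D}_0(\xi+\omega/2)\psi$, hence $\psi=e^{-i\cancel{D}_0(\xi+\omega/2)t}\hat u$, matching \eqref{eq:fourier-rep} verbatim; with your signs one would instead land on $-\left(\xi+\omega/2\right)\sigma_3+m\sigma_1$, which is not $\pm\cancel{D}_0(\xi+\omega/2)$. These are convention corrections, not gaps in the method.
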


\noindent
The above equivalence allows us to show that the time-decay of the constant mass equation dictates the same rate of decay to the time-harmonic \eqref{eq:timeharmonic-H}:
\begin{cor}\label{cor:thdecay}
For any $\eps > 0$
\begin{align}\label{eq:harmonic-decay}
    \|\mathcal{U}_{\omega}(t)\langle \partial_x\rangle^{-3/2-\eps}\|_{L^1\to L^{\infty}}\lesssim \langle t\rangle^{-1/2}\, .
\end{align}
\end{cor}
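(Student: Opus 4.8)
The plan is to use the Fourier integral representation from Theorem~\ref{Thm:timeharmonic-prop} to reduce the decay estimate for $\mathcal{U}_{\omega}(t)$ to the corresponding estimate for the constant-mass Dirac evolution $e^{-i\cancel{D}_0(\xi)t}$, which by \cite{Erdogan21} satisfies the $t^{-1/2}$ bound. First I would observe that the factor $e^{it\omega\sigma_3/2}$ in \eqref{eq:fourier-rep} is a unitary $2\times 2$ matrix of modulus-one entries, bounded uniformly in $t$ and $\xi$, so it does not affect any $L^\infty$ bound and can be pulled out. The remaining object is
\begin{equation*}
    \widetilde{\mathcal{U}}(t)u(x) = \frac{1}{2\pi}\int_{\R}e^{i\xi x}e^{-i\cancel{D}_0(\xi+\omega/2)t}\hat{u}(\xi)\,d\xi,
\end{equation*}
which, after the change of variables $\eta = \xi + \omega/2$ (a mere translation of the Fourier variable, corresponding to multiplication by a unimodular factor $e^{-i\omega x/2}$ in physical space), becomes exactly the constant-mass Dirac propagator applied to a translated/modulated version of $u$. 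Hence $\|\widetilde{\mathcal{U}}(t)\langle\partial_x\rangle^{-s}u\|_{L^\infty}$ is controlled by $\|e^{-i\cancel{D}_0 t}\langle\partial_x\rangle^{-s}(\text{modulated }u)\|_{L^\infty}$; one must only check that the modulation $u \mapsto e^{-i\omega x/2}u$ interacts correctly with the smoothing operator $\langle\partial_x\rangle^{-s}$.

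The second step addresses precisely that interaction. The smoothing operator $\langle\partial_x\rangle^{-3/2-\eps}$ has Fourier symbol $\langle\xi\rangle^{-3/2-\eps}$, whereas after translation $\xi \mapsto \eta - \omega/2$ the natural operator appearing is multiplication by $\langle\eta - \omega/2\rangle^{-3/2-\eps}$. Since $\omega$ is a fixed constant, the multiplier $\langle\xi\rangle^{3/2+\eps}\langle\xi-\omega/2\rangle^{-3/2-\eps}$ is bounded and, together with its derivatives of sufficient order, decays; by a Mikhlin-type (or elementary Fourier-support/dyadic) argument this multiplier is bounded on $L^1$, so replacing $\langle\xi-\omega/2\rangle$ by $\langle\xi\rangle$ costs only a constant factor. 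Alternatively one can simply cite that $\langle\partial_x\rangle^{-s}$ and the modulation operator commute up to a bounded operator on $L^1 \to L^1$. Either way, this reduces \eqref{eq:harmonic-decay} to the bound $\|e^{-i\cancel{D}_0 t}\langle\partial_x\rangle^{-3/2-\eps}\|_{L^1\to L^\infty}\lesssim \langle t\rangle^{-1/2}$, which is the massive one-dimensional constant-coefficient Dirac dispersive estimate established by Erdogan and Green \cite{Erdogan21} (in the case $V\equiv 0$, where the continuous spectral projection $P$ is the identity).

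Finally, one assembles the chain of inequalities: for $u \in \mathcal{S}(\R;\C^2)$,
\begin{equation*}
    \|\mathcal{U}_{\omega}(t)\langle\partial_x\rangle^{-3/2-\eps}u\|_{L^\infty}
    \leq C\|e^{-i\cancel{D}_0 t}\langle\partial_x\rangle^{-3/2-\eps}v\|_{L^\infty}
    \lesssim \langle t\rangle^{-1/2}\|v\|_{L^1}
    \lesssim \langle t\rangle^{-1/2}\|u\|_{L^1},
\end{equation*}
where $v$ is the appropriate modulation/translation of $u$ and $\|v\|_{L^1}=\|u\|_{L^1}$ (modulation is an $L^1$ isometry), then conclude by density of $\mathcal{S}$ in $L^1$. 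The main obstacle — really the only non-bookkeeping point — is the bounded-multiplier step: one must verify that the frequency shift by $\omega/2$ does not destroy the $L^1 \to L^\infty$ smoothing structure, i.e. that $\langle\partial_x\rangle^{-s}$ can be traded for $\langle\partial_x - \omega/2\rangle^{-s}$ at bounded cost; this is routine but should be stated carefully since $L^1$ multiplier theory is more delicate than $L^2$. Everything else is a direct consequence of Theorem~\ref{Thm:timeharmonic-prop} and the cited constant-mass estimate.
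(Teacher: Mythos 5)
Your proposal is correct, but it takes a genuinely different route from the paper. The paper does not reduce to the known constant-mass estimate as a black box; instead it writes $\mathcal{U}_{\omega}(t)\langle\partial_x\rangle^{-3/2-\eps}$ as convolution with an explicit kernel $\mathcal{K}(r,t)$ whose symbol contains the \emph{shifted} phase $\sqrt{(\xi+\omega/2)^2+m^2}$, and then estimates $\|\mathcal{K}(\cdot,t)\|_{L^\infty}$ directly via a dyadic partition of unity whose blocks are centered at $-\omega/2$ (i.e.\ $\supp\psi_j\subset[2^{j-1}-\omega/2,2^{j+1}-\omega/2]$), applying van der Corput with the Hessian lower bound $tm^2((\xi+\omega/2)^2+m^2)^{-3/2}\gtrsim t\,2^{-3j}$ on each block. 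In effect the paper re-runs the argument of \cite[Theorem 2.3]{Erdogan21} with the frequency shift absorbed into the decomposition, which is exactly what lets it avoid the one delicate point in your argument: the commutation of $\langle\partial_x\rangle^{-s}$ with the modulation $u\mapsto e^{i\omega x/2}u$. Your route is structurally cleaner (it makes explicit that $\mathcal{U}_\omega(t)$ is a modulation conjugate of the free massive flow), but the multiplier step must be stated more carefully than ``Mikhlin-type'': the Mikhlin--H\"ormander theorem excludes $p=1$, and the correct justification is that $m(\xi)=\langle\xi\rangle^{s}\langle\xi-\omega/2\rangle^{-s}$ satisfies $m-1\in S^{-1}(\R)$ (note $m\to1$, not $0$, at infinity --- it is $m-1$ that decays), so its inverse Fourier transform is an $L^1$ function (logarithmic singularity at the origin, rapid decay at infinity), whence $T_m=I+T_{m-1}$ is bounded on $L^1$. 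With that lemma in place, and observing that for $V\equiv0$ the weight $\langle D\rangle^{-3/2-\eps}$ of \cite{Erdogan21} is comparable to $\langle\partial_x\rangle^{-3/2-\eps}$ since $\langle D\rangle^2=1+m^2-\partial_x^2$, your chain of inequalities closes. The trade-off is clear: your approach buys reusability (any mapping property of the free flow transfers immediately to $\mathcal{U}_\omega(t)$ at the cost of one fixed multiplier constant), while the paper's buys self-containedness and sidesteps $L^1$ multiplier theory entirely.
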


\section{Notation and Preliminaries}\label{sec:notation}
\begin{itemize}
\item The Fourier transform of a function $\alpha\in L^2(\R;\C^2)$ by
\begin{equation}\label{eq:FourierDef}
    \widehat\alpha(\xi) = \mathcal{F}[\alpha](\xi)= \int_{\R} e^{-i\xi x}\alpha(x)dx \, ,
\end{equation}
and its inverse is given by
 \begin{align*}
    \widecheck{\beta} (x)= \frac{1}{2\pi} \int_{\R} e^{+i\xi x}\beta(\xi)d\xi \, . 
\end{align*} 
\item The Laplace transform of a function $\alpha\in L^1((0,\infty))$ is defined
\begin{align*}
    \mathcal{L}[\alpha](s) = \int_0^{\infty} e^{-st}\alpha(t)dt,
\end{align*}
for $\re{s}>0$. 
\item The Pauli matrices are defined by $\sigma_0 = I$,
\begin{align}\label{eq:pauli}
    \sigma_1 = \begin{pmatrix}
        0 & 1 \\ 1 & 0
    \end{pmatrix} \, ,  \qquad \sigma_2 = \begin{pmatrix}
        0 & -i \\ i & 0
    \end{pmatrix} \, , \qquad \sigma_3 = \begin{pmatrix}
        1 & 0 \\ 0 & -1
    \end{pmatrix} \, .
\end{align}
\item Convention: Our Hamiltonians depend on a ``mass parameter'', $m$. We shall occasionally, when  convenient and when there is no ambiguity, suppress the $m$ dependence.
\end{itemize}

\section{The monodromy map \eqref{eq:monodromy} of the switching mass model}\label{app:monodromy-derivation}

We begin with a derivation of \Cref{eq:monodromy} for $\mathcal{U}(2)=\mathcal{U}_-(1)\mathcal{U}_+(1)$; see \cref{eq:Mdef-switch}.  Let 
\[ h(i\partial_x,m) \equiv (i\partial_x\sigma_3 + m\sigma_1) \, .\]
Starting with \cref{eq:pwc_mdirac}, we have via Fourier transform, that 
$\widehat\alpha(\xi,t)$ satisfies:
\begin{align*}
    i\partial_t\widehat\alpha &= h(-\xi,m) \widehat\alpha ,\quad 0\le t< 1 \, ,\\
    i\partial_t \widehat\alpha &= h(-\xi,-m)\widehat\alpha ,\quad 1\le t< 2 \, .
\end{align*}
Since different Pauli matrices anti-commute
\begin{align}
\sigma_3h(-\xi,m) = h(-\xi,-m)\sigma_3.
\label{eq:hcomm}\end{align}
The eigenpairs of $h(\xi,m)$ are:
\begin{align*}
  \lambda_{\pm}(\xi) = \pm \omega(\xi) \, ,\qquad   \bv_{\pm}(\xi) = 
    \rev{\frac{1}{\sqrt{n_{\pm}(\xi)}}} \begin{pmatrix}m \\ \xi\pm\omega(\xi) \end{pmatrix} \, ,
\end{align*}
where, as in \eqref{eq:theta}, $\omega (\xi;m)=\sqrt{\xi^2 +m^2}$, and $n_\pm(\xi)=2\omega(\xi)(\omega(\xi)\pm \xi)$ are normalization factors such that $\|\bv_{\pm}(\xi)\|=1$. Let $V(\xi)= V(\xi,m)= \rev{[\bv_+(\xi)\ \bv_-(\xi)]}$ denote the $2\times2$ matrix whose columns are
 $\bv_+(\xi)$ and $\bv_-(\xi)$. Since $h(-\xi,m)$ is Hermitian, $V(\xi,m)$ is unitary. Hence, 
 \[h(-\xi,m)V(\xi)=  V(\xi) \sigma_3\omega(\xi)\quad {\rm or}\quad 
 h(-\xi,m)=  V(\xi)\ \sigma_3\omega(\xi)\ V(\xi)^*\]
 Further, the commutation relation \cref{eq:hcomm} implies
 \[h(-\xi,-m)\ \sigma_3V(\xi)=  \sigma_3V(\xi) \sigma_3\omega(\xi)\quad {\rm or}\quad 
 h(-\xi,-m)\ =  \sigma_3V(\xi) \sigma_3\omega(\xi)\ V(\xi)^*\ \sigma_3.\]
The Fourier transform of the monodromy map,  $\widehat{M}(\xi)$,  is given by the product of unitary matrices:
\begin{align*}
\widehat{M}(\xi) &= e^{-ih(-\xi,-m)}\ e^{-ih(-\xi,m)} \\
&= \big(\ \sigma_3V(\xi) e^{-i\sigma_3\omega(\xi)}\ V(\xi)^*\ \sigma_3\big)\ 
\big( V(\xi) e^{-i\sigma_3\omega(\xi)}\ V(\xi)^*\ \big)\\
&= \big(\ \sigma_3V(\xi) e^{-i\sigma_3\omega(\xi)}\ V(\xi)^* \ \big)^2 
\  \equiv\ \mathscr{M}^2(\xi) \, .
\end{align*}

A direct calculation shows that 
\begin{align*}
    \mathscr{M}(\xi;m)&=\begin{pmatrix}
        \cos(\omega(\xi)) + i\xi \sinc(\omega(\xi))
          & -im\sinc(\omega(\xi)) \\
        im\sinc(\omega(\xi)) & - \cos(\omega(\xi)) +i\xi\sinc(\omega(\xi))  
    \end{pmatrix} \\
    &= i\xi \sinc(\omega(\xi))\sigma_0 + \cos(\omega(\xi))\sigma_3 +m\sinc(\omega(\xi))\sigma_2 \, ,
\end{align*}
\rev{where $\sinc(x) \equiv \sin(x)/x$.} Therefore, the eigenvalues of $\mathscr{M}(\xi;m)$ are
$$
\mu_\pm(\xi;m) = i\xi\sinc(\omega(\xi)) \pm\sqrt{\cos^2(\omega(\xi)) + m^2\sinc^2(\omega(\xi))} \, . $$
Note that $\mu_\pm(\xi)$ lie on the unit circle, as expected since $\widehat{M}(\xi)=\mathscr{M}^2(\xi)$ is unitary. Moreover, direct computation of $\mu_{\pm}^2(\xi)$, the eigenvalues of $\widehat{M}(\xi)$, shows that they are complex conjugate of one another.\footnote{This is to be expected by ODE theory \cite{coddington2012introduction}: since the right-hand side of the Fourier transformed \eqref{eq:tperDirac} has zero trace for all $t\in [0,T]$, the Floquet exponents have to sum up to $0$. Since $\widehat{M}(\xi;m)$ is unitary, the Floquet {\em multipliers} therefore are complex conjugates of each other.} Hence, we can write $\mu_{\pm}(\xi;m) = \exp \left[\pm i\theta(\xi;m) \right]$, where $\theta(\xi;m)$ is given, after some algebra, by \eqref{eq:theta}. The corresponding eigenvectors are given by
\begin{align} \label{eq:Pmatrix-elements}
    \bp_{\pm}(\xi;m) = \frac{1}{\sqrt{N(\xi;m)}}\begin{pmatrix}
    im\sinc(\omega(\xi;m)) \\ 
    \cos(\omega(\xi;m))\mp\sqrt{\cos^2(\omega(\xi;m))+m^2\sinc^2(\omega(\xi;m))}
\end{pmatrix}
\end{align}
where normalization factor $N(\xi)$ ensures $\|\bp_{\pm}\|=1$.
Defining the change of basis matrix $P(\xi;m)= \begin{pmatrix}
        \bp_{+}(\xi;m) & \bp_{-}(\xi;m)
    \end{pmatrix}$,
 it is clear that $P(\xi)$ is unitary and we have the Fourier representation of the monodromy given by
\begin{align*}
    \widehat{M}(\xi;m) = P(\xi;m)\begin{pmatrix}
     e^{+2 i \theta(\xi;m)}& 0 \\
     0& e^{-2 i \theta(\xi;m)}\end{pmatrix} P^* (\xi;m) \, . 
\end{align*}
Finally, inverting the Fourier transform, we obtain \eqref{eq:monodromy} as desired.

\section{Proof of Theorem \ref{thm:maintheorem}}\label{sec:mainPf}

Our proof of the dispersive time-decay bounds (Theorems \ref{thm:maintheorem} and \ref{thm:15theorem}) relies on the classical van der Corput  Lemma \cite{stein1993harmonic} :
    \begin{lemma}\label{lem:vandercorput}
    Let $\lambda$ be a smooth function and $f$ a smooth, compactly supported function. Suppose there exists $\lambda_0>0$, such that $\vert\lambda^{(k)}(z)\vert \geq \lambda_0$. Then there exists a constant, $c_k$, depending only on $k$, such that
    \begin{align}
        \left\vert\int_{\R} f(z)e^{i\lambda(z)} \, dz\right\vert \leq c_k \lambda_0^{-1/k}\| f'\|_{L^1}\ .
    \end{align}
\end{lemma}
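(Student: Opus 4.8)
The plan is to prove the lemma in two moves: reduce to the \emph{amplitude-free} case (the integrand $e^{i\lambda}$ with no weight), and then restore the weight $f$ by a single integration by parts against an antiderivative of $e^{i\lambda}$. I would not attempt to prove an amplitude-weighted estimate directly.

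For the first move, I would first show that for \emph{every} compact subinterval $[a,b]$ on which $|\lambda^{(k)}|\ge\lambda_0$,
\begin{equation*}
  \Bigl| \int_a^b e^{i\lambda(w)}\,dw \Bigr| \;\le\; c_k\,\lambda_0^{-1/k},
\end{equation*}
with $c_k$ depending on $k$ alone (for $k=1$ one also needs $\lambda'$ monotone, which is automatic once $k\ge2$ — the only range actually used in this paper). The proof is by induction on $k$. The base case $k=1$ is an integration by parts: writing $e^{i\lambda}=(i\lambda')^{-1}\tfrac{d}{dw}e^{i\lambda}$, the boundary terms are $\le\lambda_0^{-1}$ in modulus and the error term $\int_a^b|\tfrac{d}{dw}(\lambda')^{-1}|\,dw$ telescopes to $|(\lambda'(b))^{-1}-(\lambda'(a))^{-1}|\le\lambda_0^{-1}$ since $(\lambda')^{-1}$ is monotone. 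For the step $k-1\to k$: $\lambda^{(k)}$ is continuous and nonvanishing, hence of constant sign, so $\lambda^{(k-1)}$ is strictly monotone with at most one zero $c\in[a,b]$; off a window $(c-\delta,c+\delta)$ the mean value theorem gives $|\lambda^{(k-1)}|\ge\lambda_0\delta$, so the induction hypothesis bounds the integral over each complementary piece by $c_{k-1}(\lambda_0\delta)^{-1/(k-1)}$, while the window contributes at most $2\delta$; summing and choosing $\delta=\lambda_0^{-1/k}$ balances the two contributions and closes the induction with, e.g., $c_k=2c_{k-1}+2$.

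For the second move, fix a compact interval $[a,b]$ containing $\supp f$ in its interior on which $|\lambda^{(k)}|\ge\lambda_0$, and put $H(z):=\int_a^z e^{i\lambda(w)}\,dw$; the amplitude-free bound applied to each $[a,z]$ gives $\|H\|_{L^\infty([a,b])}\le c_k\lambda_0^{-1/k}$. Since $f$ is smooth with support strictly inside $(a,b)$ and $H'=e^{i\lambda}$, integrating by parts kills the boundary terms and yields $\int_\R f\,e^{i\lambda}=-\int_a^b f'\,H$, whence $|\int_\R f\,e^{i\lambda}|\le\|H\|_{L^\infty}\|f'\|_{L^1}\le c_k\lambda_0^{-1/k}\|f'\|_{L^1}$, which is the assertion. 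The one place demanding care is the inductive step of the first move: one must check that the $\delta$-splitting stays legal when $c$ lies near an endpoint of $[a,b]$ (a complementary piece merely shrinks, which only helps), treat separately the case where $\lambda^{(k-1)}$ has no zero in $[a,b]$ (split off a single interval of length $\delta$ adjacent to the endpoint where $|\lambda^{(k-1)}|$ is smallest), and confirm that the recursion for $c_k$ leaves the constant depending on $k$ only. These are routine; the estimate is classical \cite{stein1993harmonic}.
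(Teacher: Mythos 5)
Your proof is correct: it is the classical van der Corput argument (amplitude-free bound by induction on $k$ via integration by parts and the $\delta$-splitting around the zero of $\lambda^{(k-1)}$, then restoration of the amplitude by integrating by parts against the antiderivative $H$), which is exactly the argument the paper defers to by citing \cite{stein1993harmonic} without proof. Your side remark that the $k=1$ case additionally requires monotonicity of $\lambda'$ is accurate but immaterial here, since the paper only ever invokes the lemma with $k\ge 2$.
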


By van der Corput Lemma, the decay properties of oscillatory integrals such as \eqref{eq:Mnf_integral} are intimately related to the points where the phase function, $\theta(\xi;m)$, and its derivatives vanish. By a direct calculation (see \Cref{app:thetaderivatives}),  $\theta''(0;m)=0$ for all $m\in(0,\infty)$. The following lemma shows that $\theta'''(0;m)\neq 0$ for all but a discrete set of values of $m$.

\begin{lemma}\label{lem:thetathirdder0}
    Let $\theta(\xi;m)$ be given by \eqref{eq:theta}. Then the vanishing set
    \begin{align*}
        \Sigma = \{ m\in(0,\infty)\ | \ \theta'''(0;m)=0\}
    \end{align*}
    is discrete. Writing $\Sigma = \{m_k\}_{k\geq 1}$, there exist $M>0$ and $k_0\in \mathbb{Z}$ such that 
     $m_k=(k+k_0+1/2)\pi + \mathcal{O}(k^{-1})$ for all $k\ge M$. \rev{Furthermore, the fifth derivative of $\theta$ does not vanish at these points, and in particular  \begin{equation}\label{eq:theta5_asym}
     m_k ^3 \cdot \theta^{(5)}(0;m_k)  = (-1)^{k+k_0+1}\cdot 15 +\mathcal{O}(k^{-2}) \,
     \end{equation} as $k\to \infty$.}
\end{lemma}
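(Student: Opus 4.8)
The plan is to obtain a closed-form expression for $\theta'''(0;m)$ as a function of $m$ alone, and then to analyze its zeros. First I would Taylor-expand the phase function $\theta(\xi;m)$ in $\xi$ around $\xi=0$. Writing $\omega(\xi)=\sqrt{m^2+\xi^2}=m\sqrt{1+\xi^2/m^2}$, I expand $\omega(\xi)=m+\xi^2/(2m)+\mathcal{O}(\xi^4)$, and similarly expand $\sin(\omega(\xi))$, $\cos(\omega(\xi))$, and the radicand $m^2+\xi^2\cos^2(\omega(\xi))$ to the needed order in $\xi$. Since $\theta$ is an odd function of $\xi$ (the numerator $\xi\sin(\omega(\xi))$ is odd, the denominator is even), only odd powers appear; so $\theta(\xi;m)=\theta'(0;m)\,\xi+\frac{1}{6}\theta'''(0;m)\,\xi^3+\frac{1}{120}\theta^{(5)}(0;m)\,\xi^5+\mathcal{O}(\xi^7)$, and in particular $\theta''(0;m)=\theta^{(4)}(0;m)=0$ automatically, consistent with Appendix \ref{app:thetaderivatives}. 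Collecting the $\xi^3$ coefficient, using $\arctan(u)=u-u^3/3+\cdots$, yields $\theta'''(0;m)$ as an explicit elementary function of $\sin m$, $\cos m$, and powers of $m$ — schematically a trigonometric polynomial in $m$ divided by a power of $m$ (times a factor like $(m^2+m^2\cos^2 m)^{-3/2}$ coming from the denominator), all multiplied by a $\cos m$ from the outer factor $\xi/\sqrt{m^2+\xi^2\cos^2\omega}$ evaluated near $\xi=0$. The key structural fact I would extract is that $\theta'''(0;m)=0$ is equivalent to the vanishing of an entire (real-analytic) function $g(m)$ of the form $g(m)=\cos m\cdot q_1(m)+\sin m\cdot q_2(m)$ with $q_1,q_2$ rational (indeed polynomial after clearing denominators) in $1/m$, whose dominant behavior for large $m$ is $g(m)\sim \cos m$ (the coefficient of $\cos m$ tending to a nonzero constant while the $\sin m$ term is $\mathcal{O}(m^{-1})$ or smaller).

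Given such a $g$, discreteness of $\Sigma$ follows immediately: $g$ is real-analytic and not identically zero (its large-$m$ asymptotics show $g\not\equiv 0$), so its zero set has no accumulation point in $(0,\infty)$. For the asymptotic location of the large zeros, I would argue that for $m$ large, $g(m)=\cos m\cdot(c+\mathcal{O}(m^{-1}))+\mathcal{O}(m^{-1})$ with $c\neq 0$; near each half-integer multiple of $\pi$, where $\cos m$ has a simple zero with derivative $\mp 1$, an implicit-function / Rolle argument gives a unique zero $m_k$ of $g$ with $m_k=(k+k_0+\tfrac12)\pi+\mathcal{O}(k^{-1})$, and conversely every sufficiently large zero of $g$ lies in such a neighborhood (since away from these neighborhoods $|\cos m|$ is bounded below and dominates the $\mathcal{O}(m^{-1})$ remainder). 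This pins down $\Sigma=\{m_k\}$ with the stated asymptotics for $k\ge M$.

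For the final claim about $\theta^{(5)}$, I would extract the $\xi^5$ coefficient from the same Taylor expansion, obtaining $\theta^{(5)}(0;m)$ as another elementary function of $\sin m,\cos m,$ and powers of $1/m$; call the cleared-denominator version $h(m)$. Then I would evaluate $h$ at $m=m_k$ using $\cos m_k=\mathcal{O}(k^{-1})$ (so $\sin m_k=(-1)^{k+k_0+1}+\mathcal{O}(k^{-2})$), which kills all terms carrying a factor of $\cos m_k$ and leaves the leading contribution from the $\sin m_k$ terms; a bookkeeping of the coefficients should produce $m_k^3\,\theta^{(5)}(0;m_k)=(-1)^{k+k_0+1}\cdot 15+\mathcal{O}(k^{-2})$, the factor $15$ coming naturally from the $\arctan$ expansion ($5!/(4!)=5$ combined with the $1/3$ and derivative bookkeeping, or more simply as the relevant binomial/factorial constant in the fifth-order term). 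In particular $\theta^{(5)}(0;m_k)\neq 0$ for $k$ large, so at the exceptional masses the first non-vanishing derivative of $\theta(\cdot;m_k)$ at $\xi=0$ beyond the linear term is the fifth.

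\textbf{Main obstacle.} The genuine difficulty is purely computational bookkeeping: carrying the multivariate Taylor expansion of the composite $\arctan\big(\xi\sin\omega(\xi)/\sqrt{m^2+\xi^2\cos^2\omega(\xi)}\big)$ to fifth order in $\xi$ while tracking the $m$-dependence exactly, so that the coefficient of $\cos m$ in $\theta'''(0;m)$ is shown to have a nonzero limit and the constant $15$ is pinned down. Once the explicit forms of $g(m)$ and $h(m)$ are in hand, the discreteness and the asymptotics are routine real-analysis (analyticity $\Rightarrow$ isolated zeros; perturbation of simple zeros of $\cos m$). I would organize the computation by first expanding $\omega$, then $\sin\omega$ and $\cos\omega$, then the radicand and its inverse square root, then the argument of $\arctan$, and finally composing with the $\arctan$ series, keeping only odd powers of $\xi$ throughout to halve the work; the parity observation ($\theta$ odd in $\xi$) is what makes the $\xi^3$ and $\xi^5$ coefficients the only relevant ones and explains the vanishing of $\theta''(0;m)$ and $\theta^{(4)}(0;m)$ for all $m$.
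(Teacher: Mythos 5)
Your proposal is correct and follows essentially the same route as the paper: obtain the explicit elementary formula for $\theta'''(0;m)$ (the paper tabulates it in Appendix~\ref{app:thetaderivatives} by direct differentiation, equivalent to your Taylor expansion in $\xi$), deduce discreteness from analyticity, locate the large zeros by perturbing the simple zeros of $\cos m$, and then evaluate the explicit $\theta^{(5)}(0;m)$ formula at $m_k$ using $\cos m_k=\mathcal{O}(k^{-1})$. The only quibble is a sign slip in your intermediate claim $\sin m_k=(-1)^{k+k_0+1}+\mathcal{O}(k^{-2})$ (it should be $(-1)^{k+k_0}$, the dominant term $-5m^2\sin m$ in the bracket of \eqref{eq:theta5_exp} then supplying the extra minus sign in \eqref{eq:theta5_asym}), which your deferred ``bookkeeping'' would catch.
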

\begin{proof}
We have the explicit formula for the third derivative of $\theta$ evaluated at $\xi=0$ from \Cref{lem:thetader0}
    \begin{align}\label{eq:thetathirdder0}
        \theta ''' (0,m) &= \frac{1}{m^3}\left[ -2\sin^3(m)+ 3m\cos(m)-3\sin(m)\cos^2(m)\right]\, .
    \end{align}
    
   Hence,  $\theta ''' (0,m)$ vanishes if and only if $\cos(m)-(3m)^{-1}\sin(m) \cos^2(m)- 
   2(3m)^{-1}\sin(m)=0$. By analyticity, this equation has a discrete set of solutions. Furthermore, if we consider $m$ large, the solutions are precisely
    $m_k= (k+\frac12)\pi + \mathcal{O}(k^{-1})$, for $k\ge M$, where $M$ is sufficiently large.

    \rev{To prove \eqref{eq:theta5_asym}, first note that by simple Taylor expansion,  $\sin (m_k)= (-1)^{k+k_0}+ \mathcal{O}(k^{-2})$ and $\cos (m_k) = \mathcal{O}(k^{-1})$. Plugging these asymptotic expressions into \eqref{eq:theta5_exp}, the explicit formula for $\theta^{(5)}(0;m)$, leads to  the desired result.}
\end{proof}

\rev{While \eqref{eq:theta5_asym} is only valid for sufficiently large $m_k\in \Sigma$, Table \ref{tab:theta5} shows that for $k=1,\ldots ,8$, the agreement is quite good and $\theta^{(5)}$ does not vanish.

\begin{table}[h]
\begin{center}
\begin{tabular}{ |c|c|c|c|c|c|c|c|c| } 
 \hline
 $m_k$& 4.5659 & 7.7681 & 10.9346 & 14.0898 & 17.2401 & 20.3876 & 23.5336 & 26.6785\\
 \hline
 $m_k^3\theta^{(5)}(0;m_k) $& 14.1881 & -14.7151 & 14.8556 & -14.9129 & 14.9418 & -14.9583 & 14.9687 & -14.9757\\ 
 \hline
\end{tabular}
\end{center}
\caption{\rev{Numerically computed values of $m^3\theta^{(5)}(0;m)$ for $m=m_1,\ldots ,m_8 \ in\Sigma$, see \eqref{eq:SigmaDef}.}}
\label{tab:theta5}
\end{table}

}

We are now in a position to prove Theorems \ref{thm:maintheorem} and \ref{thm:15theorem}.

\subsection{Proof of the time-decay bound \eqref{eq:main_ubd} of Theorem \ref{thm:maintheorem}}\label{sec:mainOscil}
Fix $m\notin\Sigma$. By definition \eqref{eq:SigmaDef}, $\theta''(0;m)=0$ and $\theta'''(0,m)\neq 0$. Since $\theta'''(\xi;m)$ is continuous in $\xi$, there exist $c>0, \delta>0$ such that
\begin{align}\label{eq:DataProperties}
    \vert \theta'''(\xi;m)\vert > c=c(m) >0\, ,\qquad \text{for } \xi\in (-2\delta,2\delta)\, .
\end{align}
For the remainder of the proof we suppress the $m-$ dependence of $\theta$ and its derivatives. Let $\xi\mapsto\hat{f}(\xi)$ be smooth and supported in $[-\delta,\delta]$ and introduce  a smooth cutoff function $\xi\mapsto \chi(\xi)$,
 such that $\chi(\xi)\equiv1$ for $|\xi|\le\delta$ and $\chi(\xi)\equiv0$ for $|\xi|>1$.
Then,
\begin{align*}
    M^n f &=  \frac{1}{2\pi}\int_{\R} \left[\chi(\xi)P(\xi)\right] \begin{pmatrix}
     e^{+2 i n\theta(\xi)}& 0 \\
     0&  e^{-2 i n\theta(\xi)}\end{pmatrix} \left[\chi(\xi) P^* (\xi) \hat{f}(\xi)\right] e^{i\xi x} \, d\xi\\
     &=  \frac{1}{2\pi}\int_{\R} \chi(\xi)P(\xi) \begin{pmatrix}      e^{+2 i n\theta(\xi)}& 0 \\      0&  e^{-2 i n\theta(\xi)}\end{pmatrix}  \hat{\phi}(\xi)e^{i\xi x} \, d\xi\\
     &=\frac{1}{2\pi}\int_{\R} P^{\chi}(\xi)\begin{pmatrix}
        e^{+2 i n\theta(\xi)}\hat \phi_{+}(\xi) \\ e^{-2 i n\theta(\xi) }\hat \phi_{-}(\xi)
    \end{pmatrix}e^{i\xi x} \, d\xi \, ,
\end{align*}
where $P(\xi)$ is given in \eqref{eq:Pmatrix-elements}, 
\begin{equation}P^{\chi}(\xi)\equiv \chi(\xi)P(\xi)\quad {\rm and}\qquad  
  \hat\phi(\xi) = (\hat\phi_{+},\hat\phi_{-})^\top \equiv  (P^\chi)^*(\xi)\hat{f}(\xi)  \, .
  \label{eq:Pchi}
  \end{equation}

Hence, $M^nf$ is the sum of four terms, each of the form:
\begin{align}\label{eq:Mnf_postTri}
 I_{jk}(x;n) \equiv  \int_{\R} e^{i\xi x}\ P_{jk}^{\chi}(\xi) \left[e^{\pm 2 i n\theta(\xi)}\hat \phi_{\pm}(\xi)\right]\, d\xi = \left(\check{P}^{\chi}_{jk} * u_{\pm}\right)(x; n)\qquad  (j,k=1,2) \, , 
\end{align}
where
\begin{align}\label{eq:uDef}
    u_{\pm}(x,n) \equiv \int_{\R} e^{i\xi x}\ e^{\pm 2 i n\theta(\xi)}\hat \phi_{\pm}(\xi) \, d\xi ~~ .
\end{align}
It follows that 
\begin{align}
\sup_{x\in\R}\ |(M^nf)(x)| &\le \sum^2_{j,k=1} \sup_{x\in\R}\ |I_{jk}(x;n)| \ =\ \  \sum^2_{j,k=1}  \sup_{x\in\R}\ \Big|\left(\check{P}^{\chi}_{jk} * u_{\pm}\right)(x; n)\Big| \nonumber\\
&\le \rev{\left(\sum_{j,k=1}^{2}\|\check{P}^{\chi}_{jk}\|_{L^1}\right)}
\sup_{x\in\R} |u_{\pm}(x; n)|
\label{eq:sumIk-bound}\end{align}
We complete our bound on $M^nf$
 using the following estimate on the oscillatory integral \eqref{eq:uDef}:
\begin{lemma}\label{lem:decaywitoutP}
    \begin{align}\label{eq:u_pm-bound}
        \|u_{\pm}(\cdot, n)\|_{\infty} = \sup_{x} \left\vert\int_{\R} e^{\pm 2 i n\theta(\xi)+i\xi x}\hat \phi_{\pm}(\xi) d\xi \right\vert \lesssim \frac{1}{n^{1/3}}\|\phi_{\pm}\|_{L^{1}} \, .
    \end{align}
\end{lemma}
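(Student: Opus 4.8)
The plan is to estimate the oscillatory integral
$$u_{\pm}(x,n) = \int_{\R} e^{i(\pm 2n\theta(\xi) + \xi x)}\hat\phi_{\pm}(\xi)\,d\xi$$
by applying van der Corput's Lemma (\Cref{lem:vandercorput}) with the phase $\lambda(\xi) = \pm 2n\theta(\xi) + \xi x$ and $k=3$. The $x$-dependence enters only linearly, so $\lambda'''(\xi) = \pm 2n\,\theta'''(\xi)$, which is independent of $x$. By \eqref{eq:DataProperties}, on the support of $\hat\phi_{\pm}$ (contained in $[-\delta,\delta]$) we have $|\theta'''(\xi)| > c(m) > 0$, hence $|\lambda'''(\xi)| \geq 2cn$ uniformly in $x$. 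Van der Corput then gives
$$|u_{\pm}(x,n)| \leq c_3\,(2cn)^{-1/3}\,\|\hat\phi_{\pm}'\|_{L^1} \lesssim n^{-1/3}\|\hat\phi_{\pm}'\|_{L^1},$$
with a constant independent of $x$; taking the supremum over $x$ yields the claimed bound, modulo identifying $\|\hat\phi_{\pm}'\|_{L^1}$ with $\|\phi_{\pm}\|_{L^1}$.

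The one genuine subtlety is the norm appearing on the right-hand side: \eqref{eq:u_pm-bound} is stated with $\|\phi_{\pm}\|_{L^1}$ (the $L^1$ norm of the physical-space function), whereas van der Corput naturally produces $\|\hat\phi_{\pm}'\|_{L^1}$ (the $L^1$ norm of the $\xi$-derivative of its Fourier transform). These are not literally equal, but $\hat\phi_{\pm}'(\xi) = \mathcal F[(-ix)\phi_{\pm}(x)](\xi)$, and since $\hat\phi_{\pm}$ is smooth and compactly supported, so is $\hat\phi_{\pm}'$; thus $\|\hat\phi_{\pm}'\|_{L^1(\R)} \lesssim \|\hat\phi_{\pm}'\|_{L^\infty} \lesssim \| x\phi_{\pm}\|_{L^1}$, and by the rapid decay of the Schwartz function $\phi_{\pm}$ this is controlled (up to a fixed constant depending only on the fixed cutoff data) by a weighted norm of $\phi_{\pm}$; alternatively, since the $\hat\phi_{\pm}$ have $\xi$-support in the fixed compact set $[-\delta,\delta]$, one may simply absorb the length of that interval and write $\|\hat\phi_{\pm}'\|_{L^1}\lesssim \|\hat\phi_{\pm}\|_{W^{1,\infty}} \lesssim \|\phi_{\pm}\|_{L^1}$ after one more integration by parts in the Fourier inversion. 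I would carry out this reduction explicitly but briefly, noting that the implicit constant depends only on $\delta$ and the cutoff $\chi$, both of which are fixed in terms of $m$.

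A secondary point worth checking is that van der Corput's Lemma as stated requires the amplitude $\hat\phi_{\pm}$ to be smooth and compactly supported, which holds here since $\hat\phi = (P^\chi)^*\hat f$ with $\hat f$ smooth and supported in $[-\delta,\delta]$ and $P^\chi = \chi P$ smooth there; in particular $P(\xi)$ is smooth near $\xi = 0$ (the normalization factor $N(\xi;m)$ in \eqref{eq:Pmatrix-elements} is bounded below on the relevant neighborhood provided $m\neq 0$). So no regularity issues arise. The main obstacle, such as it is, is purely bookkeeping: tracking that all implicit constants are uniform in $x$ and in $n$ and depend only on the fixed data $(m,\delta,\chi)$, so that the bound genuinely closes the estimate \eqref{eq:sumIk-bound} for $\|M^n f\|_{L^\infty}$. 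No stationary-phase expansion is needed for the upper bound — that refinement is reserved for the asymptotic formula \eqref{eq:main_exp}, which is a separate matter.
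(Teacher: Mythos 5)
Your core mechanism is right: van der Corput with $k=3$, using that the linear-in-$x$ part of the phase drops out of the third derivative and that $|\theta'''|\geq c(m)>0$ on the support of the data, with all constants uniform in $x$ and $n$. But the step you yourself flag as "the one genuine subtlety" is where the argument breaks. Applying van der Corput with amplitude $\hat\phi_{\pm}$ produces $\|\hat\phi_{\pm}'\|_{L^1}$, and your proposed reduction $\|\hat\phi_{\pm}'\|_{L^1}\lesssim\|\hat\phi_{\pm}\|_{W^{1,\infty}}\lesssim\|\phi_{\pm}\|_{L^1}$ is false: $\hat\phi_{\pm}'(\xi)=\mathcal F[(-iy)\phi_{\pm}(y)](\xi)$, so the best sup bound is $\|y\,\phi_{\pm}(y)\|_{L^1}$, a weighted norm that is not controlled by $\|\phi_{\pm}\|_{L^1}$. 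Compact $\xi$-support does not rescue this; e.g.\ $\phi(y)=g(y)-g(y-R)$ keeps $\|\phi\|_{L^1}$ and $\supp\hat\phi$ fixed while $\|\hat\phi'\|_{L^1}$ grows like $R$. Your first fallback (a weighted norm of $\phi_{\pm}$) proves a genuinely weaker statement than \eqref{eq:u_pm-bound}, and the weakened bound would not feed correctly into \eqref{eq:sumIk-bound} to give the theorem's conclusion $\lesssim n^{-1/3}\|f\|_{L^1}$ with a constant uniform over the admissible data.

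The paper's proof avoids this by never putting $\hat\phi_{\pm}$ in the amplitude slot. Since $\chi\equiv 1$ on $\supp\hat\phi_{\pm}$, one writes $\hat\phi_{\pm}(\xi)=\chi(\xi)\int e^{-i\xi y}\phi_{\pm}(y)\,dy$, applies Fubini to exchange the $y$- and $\xi$-integrals, and then applies van der Corput to the inner kernel integral $\int e^{in(\xi s+2\theta(\xi))}\chi(\xi)\,d\xi$ whose amplitude is the \emph{fixed} cutoff $\chi$; the derivative factor is then $\|\chi'\|_{L^1}$, an absolute constant depending only on $(m,\delta)$, and H\"older in $y$ produces exactly $\|\phi_{\pm}\|_{L^1}$. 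This convolution-kernel reduction (estimate the kernel in $L^\infty$, then use $L^1\to L^\infty$ duality) is the standard device for dispersive bounds, and it is the missing ingredient in your write-up; with it inserted, the rest of your argument goes through unchanged.
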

\begin{proof}
Using Fubini's Theorem and H{\"o}lder inequality, we have that
    \begin{align*}
        \sup_{x\in \mathbb{R}} \left\vert\int_{\R} e^{+ 2 i n\theta(\xi)+i\xi x}\hat \phi_{\pm}(\xi) \, d\xi \right\vert &= \sup_{x\in \mathbb{R}} \left\vert\int_{\R}\int_{\R} e^{+ 2 i n\theta(\xi)+i\xi (x-y)} \phi_{\pm}(y)  \, dy\, d\xi \right\vert \\
        &= \sup_{x\in \mathbb{R}} \left\vert\int_{\R}\left( \int_{\R} e^{+ 2 i n\theta(\xi)-i\xi (x-y)} \chi(\xi)   \,  d\xi \right)  \phi_{\pm}(y) \, dy \right\vert \\
        &\leq \sup_{s\in \R}\left\vert\int e^{in(\xi s +2 \theta(\xi)) }\chi(\xi)\, d\xi\right\vert\|\phi_{\pm}\|_{L^1} \, 
    \end{align*}
    where $\phi_{\pm} \in L^1(\R,\C)$ since $\phi_{\pm}$ are Schwartz class. Defining the phase function 
\begin{equation}\label{eq:BigTheta}
    \Theta (\xi, s) \equiv  \xi s + 2\theta(\xi) \, ,    
    \end{equation}
    where $\theta(\xi)$ is defined in \eqref{eq:theta}.
    Choose $s_0=-2\theta'(0)$ so that $\partial_{\xi}\Theta(0,s_0)=0$.
    Furthermore, since $\theta(\xi)$ is an odd function, we have $\partial_{\xi \xi}\Theta (0, s_0)=0$.   By van der Corput's Lemma, there is a constant $C>0$, independent of $\phi_{\pm}$, such that
    \begin{align*}
        \sup_{x} \left\vert\int_{\R} e^{\pm 2 i n\theta(\xi)+i\xi x}\hat \phi_{\pm}(\xi) \, d\xi \right\vert \leq C\frac{\|\partial_{\xi}\chi\|_{L^1}}{(cn)^{1/3}}\|\phi_{+}\|_{L^1} \, ,
    \end{align*}
    where $c= c(m)$ was chosen to satisfy the  bound $\vert \theta ''(\xi)\vert > c>0$ on the support of $\hat{\phi}_{\pm}$.
\end{proof} 

Substituting the bound of Lemma \ref{lem:decaywitoutP} into \eqref{eq:sumIk-bound}, we obtain
$\|M^nf\|_\infty \lesssim n^{-1/3} \|\phi\|_{L^1}   \| \check{P}^{\chi}_{jk}\|_{L^1}$.
Since $\| \check{P}^{\chi}_{jk}\|_{L^1}$ is finite and independent of $n$ and the data $f$, we have 
$    \|M^n f\|_{\infty} \lesssim n^{-1/3}
    \|\phi \|_{L^1}
    $.
Finally, $\hat\phi= (P^\chi)^* \hat{f} $  and so by Young's inequality
$  \| \phi \|_{L^1} = \left\| \check{P}^{\chi}\ast f\right\|_{L^1} \leq \|\check{P}^\chi\|_{L^1}\|f\|_{L^1} $. Therefore, $\|M^nf\|\lesssim n^{-1/3}\|f\|_{L^1}$. The proof of \eqref{eq:main_ubd} in \Cref{thm:maintheorem} is now  complete.

\subsection{Proof of the asymptotic expansion \eqref{eq:main_exp}}\label{sec:exp_pf}

We next further prove that for similar choice of initial data, $f$,  the time-decay upper bound in \eqref{eq:main_ubd} is attained. For this, we employ the asymptotic expansion \eqref{eq:exp_airy} given in \Cref{lem:Airyexpansion}.

As above in the proof of \Cref{lem:decaywitoutP}, we choose  $s_0 = -2\theta'(0)$. Once again,  $\partial_{\xi}\Theta(0,s_0) = 0$ and 
    $\partial_{\xi \xi }\Theta(0,s_0) = 0$, and furthermore  we find via  \eqref{eq:BigTheta} and \eqref{eq:theta}, that $\partial_{\xi \xi \xi}\Theta(0,s_0) = \theta'''(0)\neq 0$.
Thus, for every time $t=nT$, there is a point $x=ns_0$ in which we can apply the asymptotic expansion \Cref{lem:Airyexpansion} to \eqref{eq:Mnf_integral}, the integral representation of $M^n f (x)$, yielding
\begin{align*}
 [M^n f] (ns_0) &= \frac{1}{2\pi}    \int_{\R} P(\xi) \begin{pmatrix}
     e^{+2 i n\theta(\xi)}& 0 \\
     0&  e^{-2 i n\theta(\xi)}\end{pmatrix} P^* (\xi) \hat{f}(\xi)e^{i\xi n s_0} \, d\xi  \\
    [{\rm \Cref{lem:Airyexpansion}}] \qquad \qquad   &= \frac{1}{2\pi}P^{\chi}(0) \begin{pmatrix}
          \hat{\phi}_{+}(0) \\ \hat{\phi}_{-}(0)
     \end{pmatrix}\Ai(0)\vert \theta'''(0)\vert^{1/3}e^{\pm 2i\theta(0)n}\frac{1}{(3n)^{1/3}} + O(n^{-2/3}) \\
    [{\rm \cref{eq:Pchi}}] \qquad \qquad &=\frac{1}{2\pi} P^\chi(0) (P^\chi)^*(0)\begin{pmatrix}
    \hat{f}_1(0) \\ \hat{f}_2 (0)
\end{pmatrix} \Ai(0)\vert \theta'''(0)\vert^{1/3}e^{\pm 2i\theta(0)n}\frac{1}{(3n)^{1/3}} + O(n^{-2/3}) \\
    &= \frac{1}{2\pi}\begin{pmatrix}
    \int\limits_{\R} f_1(x) \, dx \\ \int\limits_{\R} f_2 (x) \, dx
\end{pmatrix} \Ai(0)\vert \theta'''(0)\vert^{1/3}e^{\pm 2i\theta(0)n}\frac{1}{(3n)^{1/3}} + O(n^{-2/3}) \, .
\end{align*}

\subsection{Proof of Theorem \ref{thm:15theorem}}\label{sec:15pf}

The proof  for the case of exceptional mass parameters, $m\in\Sigma=\{m_k\}_{k\ge1}$, is  analogous  to that of Theorem \ref{thm:maintheorem}. Recall that by definition,  $\theta'''(0;m_k)=0$. Also, since $\theta(\xi)$ is an odd smooth function,  $\theta ^{(4)}(0;m)=0$ for {\em all} $m>0$. \rev{Furthermore, by \eqref{eq:theta5_asym} we know that $\theta^{(5)}(0,m_k)\neq 0$ for all but (perhaps) finitely many $m_k \in \Sigma$, and numerical evidence in \Cref{fig:fifth_derivative_at_masses} implies that in fact $\theta^{(5)}(0,m_k)\neq 0$ for  {\em all} values of $m_k$. }

\begin{figure}
    \centering
    \includegraphics[width=0.5\linewidth]{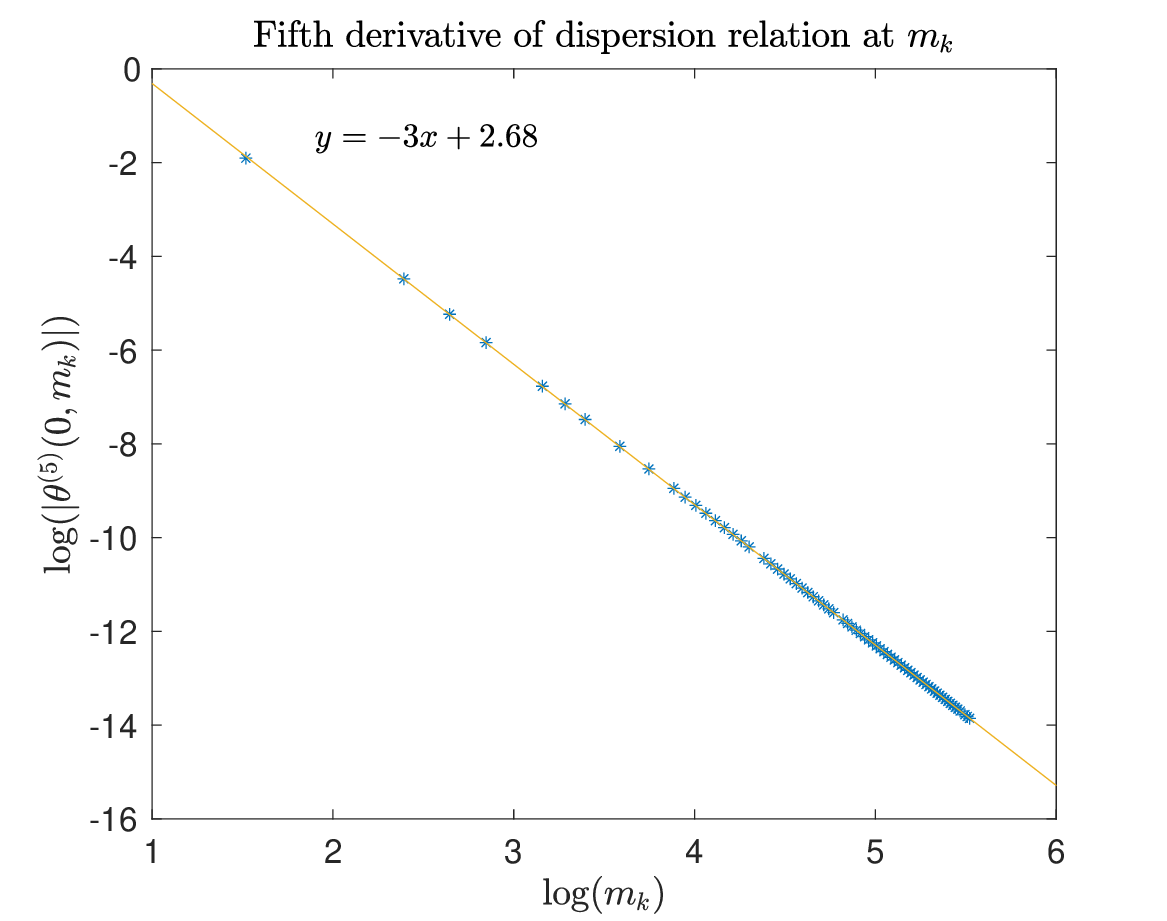}
    \caption{Log-log plot of $\theta^{(5)}(0,m_k)$ which shows that $\theta^{(5)}(0,m_k)\sim m_k^{-3}$.}
    \label{fig:fifth_derivative_at_masses}
\end{figure}

Hence, the upper bound part of the proof follows in the exact same way as in Theorem \ref{thm:maintheorem}, only with an upper bound of the form  $$\|u_{\pm}(\cdot, n)\|_{\infty} = \sup_{x} \left\vert\int_{\R} e^{\pm 2 i n\theta(\xi)+i\xi x}\hat \phi_{\pm}(\xi) d\xi \right\vert \lesssim \frac{1}{n^{1/5}}\|\phi_{\pm}\|_{L^{1}} \, ,$$ instead of the analogous result in Lemma \ref{lem:decaywitoutP}, which is proven by Van der Corput Lemma (Lemma \ref{lem:vandercorput}) in an analogous way.

The expansion argument is identical to that of Sec.\ \ref{sec:exp_pf}. The only difference that, because the phase $\Theta (\xi,s_0)$ is now {\em  triply} degenerate at $\xi=0$, we use the asymptotic expansion \eqref{eq:exp_quint} given in Lemma \ref{lem:Airyexpansion} instead, and the proof follows.

\section{Analysis of the rotating mass model \eqref{eq:timeharmonic-H}; proof  of  \Cref{Thm:timeharmonic-prop} }\label{sec:thpf}

To obtain the formulas in \Cref{Thm:timeharmonic-prop} we apply both the Fourier (in $x$) and Laplace (in $t$) transforms to the solution $(\alpha_1 (t,x),\alpha_2(t,x))^{\top}$ of \eqref{eq:timeharmonic-H}, and solve the corresponding algebraic system exactly. Let
$$
    \Phi_j(\xi,s) \equiv \mathcal{L}[\mathcal{F}[\alpha_j]](\xi,s)\, , \qquad j=1,2 \, .$$
The the transformed \eqref{eq:timeharmonic-H} is
\begin{align}
    is\Phi_1(\xi,s) -i\hat u_1 &= +\xi \Phi_1(\xi,s) + m \Phi_2(\xi,s-i\omega) \label{eq:FL1}\, ,\\
    is\Phi_2(\xi,s) -i\hat u_2 &=-\xi \Phi_2(\xi,s) + m \Phi_1(\xi,s+i\omega)\label{eq:FL2}\, ,
\end{align}
where $(\hat{u}_1, \hat{u}_2)$ is the initial data. By the replacement $s\mapsto s+i\omega$ in the \eqref{eq:FL1} we obtain $\Phi_1(\xi,s+i\omega)$ in terms of $\Phi_2(\xi,s)$. Subsequently substituting this expression into the \eqref{eq:FL2} gives a single equation for $\Phi_2(\xi,s)$ which is easily solved. Finally an expression for $\Phi_1(\xi,s)$ is then obtained from \eqref{eq:FL1} and the expression for $\Phi_2(\xi,s)$.
\begin{align*}
    \Phi_1(\xi,s) &= \frac{(s-i\omega/2)}{(s-i\omega/2)^2 + (p(\xi))^2}\hat u_1(\xi) - \rev{\frac{p(\xi)}{(s-i\omega/2)^2 + (p(\xi))^2}}\left(\frac{im\hat u_2(\xi)+i(\xi+\omega/2)\hat u_1(\xi)}{p(\xi)}\right)\, , \\
     \Phi_2(\xi,s) &= \frac{(s+i\omega/2)}{(s+i\omega/2)^2+(p(\xi))^2}\hat u_2(\xi) + \frac{p(\xi)}{(s+i\omega/2)^2+(p(\xi))^2}\left(\frac{i(\xi+\omega/2)\hat u_2(\xi)-im\hat u_1(\xi)}{p(\xi)}\right)\, ,
\end{align*}
where $
    p(\xi) = \sqrt{(\xi+\omega/2)^2+m^2}>0$. 
Inverting the Laplace transform first and then the Fourier transform we obtain
\begin{align*}
    \phi_1(x,t) &= e^{+i\omega t/2}\int_{\R} e^{i\xi x}\left(\cos(p(\xi)t)\hat u_1(\xi) -\frac{\sin(p(\xi)t)}{p(\xi)}\left[im\hat u_2(\xi)+i(\xi+\omega/2)\hat u_1(\xi)\right]\right) \frac{d\xi}{2\pi}\, ,\\
     \phi_2(x,t) &= e^{-i\omega t/2}\int_{\R}e^{i\xi x}\left(\cos(p(\xi)t)\hat u_2(\xi) + \frac{\sin(p(\xi)t)}{p(\xi)}\left[i(\xi+\omega/2)\hat u_2(\xi)-im\hat u_1(\xi)\right]\right)\frac{d\xi}{2\pi}\, .
\end{align*}
This can be written succinctly in the matrix form
\begin{subequations}\label{eq:prop}
\begin{align}
    \phi(x,t) = \frac{1}{2\pi}\int_{\R} e^{i\xi x} U(\xi,t)\hat{\phi}_0(\xi)d\xi\ ,
\end{align}
with the matrix $U(\xi,t)$ defined
\begin{align}
U(\xi,t)=
\begin{pmatrix}
    e^{i\omega t/2}\left(\cos(p(\xi)t)-i(\xi+\omega/2)\frac{\sin(p(\xi)t)}{p(\xi)}\right) & -im e^{i\omega t/2}\frac{\sin(p(\xi)t)}{p(\xi)}\\
    -im e^{-i\omega t/2}\frac{\sin(p(\xi)t)}{p(\xi)} & e^{-i\omega t/2}\left(\cos(p(\xi)t) + i(\xi+\omega/2)\frac{\sin(p(\xi)t)}{p(\xi)}\right)
\end{pmatrix}
\end{align}
\end{subequations}
We note that $    U(\xi,t) =  e^{it\omega\sigma_3/2}e^{-i\cancel{D}_0(\xi+\omega/2)t}$. 
where $e^{-i\cancel{D}_0(\xi)t}$ is the Fourier propagator of the constant mass Dirac equation. This relation combined with \Cref{eq:prop} proves the theorem.

\subsection{Proof of Corollary \ref{cor:thdecay}}\label{sec:thdecaypf}

The following argument appears in \cite{Erdogan21, kraisler2023dispersive}, and is included here briefly, for completeness. Fix $\eps > 0$. This estimate follows by considering \Cref{eq:fourier-rep}
\begin{align*}
    \|\mathcal{U}_{\omega}(t)\langle \partial_x\rangle^{-3/2-\eps}u(\cdot)\|_{L^{\infty}} & = \sup\limits_{x}\left\vert \int_{\R}e^{i\xi x}e^{it\omega\sigma_3/2}e^{-i\cancel{D}_0(\xi+\omega/2)t}\langle \xi\rangle^{-3/2-\eps}\hat{u}(\xi)\frac{d\xi}{2\pi}\right\vert\\
    &=\sup\limits_{x}\left\vert \int_{\R}\mathcal{K}(x-y,t)u(y)dy\right\vert\ \leq \|\mathcal{K}(\cdot,t)\|_{L^\infty} \cdot \|u\|_{L^1}\, ,
\end{align*}
where the kernel $\mathcal{K}(r,t)$ is given by
\begin{align}\label{eq:kernelToEstimate}
    \mathcal{K}(r,t)=\int_{\R}e^{i\xi r}e^{it\omega\sigma_3/2}e^{-i\cancel{D}_0(\xi+\omega/2)t}\langle \xi\rangle^{-3/2-\eps}\frac{d\xi}{2\pi}\, .
\end{align}
Thus the proof of the estimate \Cref{eq:harmonic-decay} reduces to showing that the kernel function $\mathcal{K}(r,t)$ has the desired decay. This follows from a van der Corput Lemma-type argument, applied to dyadic cutoff functions, similar to ~\cite[Theorem 2.3]{Erdogan21}. We sketch the proof here. 

For $j\in\N$, let $\psi_j\in C_c^{\infty}(\R)$ with $\supp{\psi_j}\subset [2^{j-1}-\omega/2,2^{j+1}-\omega/2]$ and let $\psi_0\in C_c^{\infty}(\R)$ be supported in a small neighborhood around $-\omega/2$ such that
\begin{align*}
     \sum_{j=0}^{\infty} \psi_j = 1\, , \qquad \|\psi_j\|_{L^1}\lesssim 2^j\, ,\qquad \|\partial_{\xi}\psi_j\|_{L^1}\lesssim 1\, .
\end{align*}
By inserting this partition of unity under the integral sign in \eqref{eq:kernelToEstimate} we see
\begin{align}\label{eq:kerneldecomp}
    \vert \mathcal{K}(r,t)\vert \leq \sum_{j=0}^{\infty} 2^{-3j/2} 2^{-\eps j} I_j\, ,
\end{align}
where
\begin{align*}
    I_j= \left\vert\int_{\R} e^{\pm i(\sqrt{(\xi+\omega/2)^2+m^2}+\omega/2)t-irx}\psi_j(\xi) d\xi\right\vert\, .
\end{align*}
By an application of the Van der Corput lemma, \ref{lem:vandercorput}, along with the inequality
\begin{align*}
    \left| \partial _{kk} \left[t(\sqrt{(\xi+\omega/2)^2+m^2}+\omega/2)+kr\right] \right| &=  \frac{tm^2}{((\xi+\omega/2)^2+m^2)^{\frac32}}\gtrsim t2^{-3(j+2)} \, ,
\end{align*}
which holds on $\supp \psi_j$, we observe
\begin{align*}
    I_j&\leq C\min\left(\|\psi\|_{L^1},\vert t\vert^{-\frac12}2^{\frac{3}{2}j}\ \|\partial_{\xi}\psi\|_{L^1}\right) \, .
\end{align*}
\Cref{cor:thdecay} then follows from combining the bounds above with the decomposition \eqref{eq:kerneldecomp}.

\appendix

\section{Statement and Proofs of Asymptotic expansions}\label{app:asymptotic- expansion}

In this appendix we provide the statement and proof of two asymptotic expansions used in the proofs of Theorem \ref{thm:maintheorem} and Theorem \ref{thm:15theorem}. Expansion \eqref{eq:exp_airy} is adapted from \cite[Equation 7.7.29]{hormander2007analysis}.

\begin{lemma}\label{lem:Airyexpansion}
  Let $\lambda$ be a smooth function and $f$ a smooth and compactly supported.       
\begin{enumerate}
    \item

     If $\lambda'(0)=\lambda''(0)=0$, but $\lambda'''(0)\neq0$, then as $\omega \to \infty$,
    \begin{align}\label{eq:exp_airy}
        \int_{\R} f(z)e^{i\omega\lambda(z)} \, dz =  2\pi e^{i\lambda(0)\omega}\Ai(0)f(0)\left(\frac{2}{\vert\lambda'''(0)\vert}\right)^{\frac13}\omega ^{-\frac13} + O(\omega^{-2/3}) \, , 
    \end{align}
    where $\Ai (x)$ is the Airy function (of the first kind). 
    \item 
    If $\lambda^{(j)}(0)=0$ for $j=1,2,3,4$, but $\lambda ^{(5)}(0)\neq 0$, then as $\omega \to \infty$,
     \begin{equation}\label{eq:exp_quint}
         \int_{\R} f(z)e^{i\omega\lambda(z)} \, dz =e^{i\lambda(0)\omega}\frac25 \Gamma \left(\frac15\right)\sin \left( \frac{2\pi}{5}\right) \left(\frac{120}{|\lambda ^{(5)}(0)|}\right)^{-\frac15} f(0) \omega ^{-\frac15} + O(\omega ^{-\frac25}) \, ,
     \end{equation}
     where $\Gamma (z)=\int_0^{\infty} t^{z-1}e^{-t} \, dt$ is the usual Gamma Function.
    \end{enumerate}
\end{lemma}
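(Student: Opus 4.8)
\emph{Strategy.} The plan is to treat both expansions as instances of stationary phase at a degenerate critical point: localize near $z=0$, reduce the phase to its leading monomial by Taylor's theorem, rescale the integration variable by the natural power of $\omega$, and identify the limiting model integral. Part \emph{(1)} is the classical cubic case (essentially \cite[Eq.\ 7.7.29]{hormander2007analysis}); writing it out makes clear that the same scheme gives part \emph{(2)} verbatim with ``cubic'' replaced by ``quintic''. For \emph{(1)}: I would first assume $\supp f$ lies in a small neighborhood of $0$ on which $0$ is the only stationary point of $\lambda$ (the situation in which the lemma is used; in general a partition of unity reduces to this case up to $O(\omega^{-1/2})$ contributions from the other critical points). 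Since $\lambda'(0)=\lambda''(0)=0$, Taylor's theorem with integral remainder gives $\lambda(z)=\lambda(0)+z^3\rho(z)$ with $\rho$ smooth and $\rho(0)=\lambda'''(0)/6\neq0$. Substituting $z=\omega^{-1/3}v$,
\[\int_\R f(z)e^{i\omega\lambda(z)}\,dz=e^{i\omega\lambda(0)}\,\omega^{-1/3}\int_\R f(\omega^{-1/3}v)\,e^{\,i\rho(\omega^{-1/3}v)\,v^3}\,dv .\]
As $\omega\to\infty$ the integrand converges pointwise to $f(0)e^{i\rho(0)v^3}$; the interchange of limit and integral is justified by splitting into $|v|\lesssim1$ (dominated convergence) and $|v|\gtrsim1$, where repeated integration by parts in $v$ — using $|\partial_v(\rho(\omega^{-1/3}v)v^3)|\gtrsim v^2$ and the uniform (in $\omega\ge1$) bounds on all $v$-derivatives of $v\mapsto\rho(\omega^{-1/3}v)$ — gives decay $O(|v|^{-N})$ with constants independent of $\omega$. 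The model integral is then read off from $\Ai(0)=\tfrac1{2\pi}\int_\R e^{it^3/3}\,dt$: for $\lambda'''(0)>0$ the substitution $t=(3\rho(0))^{1/3}v$ yields $\int_\R e^{i\rho(0)v^3}\,dv=2\pi\,\Ai(0)\,(3\rho(0))^{-1/3}=2\pi\,\Ai(0)\,(2/\lambda'''(0))^{1/3}$, and for $\lambda'''(0)<0$ one takes complex conjugates (as $\Ai(0)$ is real) to get the same expression with $|\lambda'''(0)|$. For the $O(\omega^{-2/3})$ remainder, carry $f(\omega^{-1/3}v)=f(0)+\omega^{-1/3}v f'(0)+O(\omega^{-2/3}v^2)$ (and the analogous expansion of $\rho(\omega^{-1/3}v)$) one order further; the $\omega^{-1/3}v f'(0)$ term contributes $\omega^{-1/3}\!\cdot\!\omega^{-1/3}\int_\R v(\cdots)e^{i\rho(0)v^3}\,dv=O(\omega^{-2/3})$, which is also why the leading error in Theorem \ref{thm:maintheorem} depends on $\partial_\xi\hat f(0)$.

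\emph{Part (2).} The argument is identical with cubic replaced by quintic. Now $\lambda^{(j)}(0)=0$ for $j=1,\dots,4$ gives $\lambda(z)=\lambda(0)+z^5\sigma(z)$ with $\sigma$ smooth and $\sigma(0)=\lambda^{(5)}(0)/120\neq0$; the substitution $z=\omega^{-1/5}v$ reduces the integral to $e^{i\omega\lambda(0)}\omega^{-1/5}\int_\R f(\omega^{-1/5}v)e^{i\sigma(\omega^{-1/5}v)v^5}\,dv\to e^{i\omega\lambda(0)}\omega^{-1/5}f(0)\int_\R e^{i\sigma(0)v^5}\,dv$, with the tail estimate if anything easier since $|\partial_v(\sigma(\omega^{-1/5}v)v^5)|\gtrsim v^4$. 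Since $v\mapsto v^5$ is odd, $\int_\R e^{icv^5}\,dv=2\int_0^\infty\cos(cv^5)\,dv$; for $c>0$ the substitution $t=cv^5$ together with $\int_0^\infty t^{s-1}e^{it}\,dt=\Gamma(s)e^{i\pi s/2}$ at $s=\tfrac15$ gives $\int_\R e^{icv^5}\,dv=\tfrac25\,\Gamma(\tfrac15)\,\cos(\tfrac{\pi}{10})\,c^{-1/5}$, and $\cos(\pi/10)=\sin(2\pi/5)$. Inserting $c=\sigma(0)=\lambda^{(5)}(0)/120$ (and conjugating when $c<0$) produces the constant in \eqref{eq:exp_quint}, and the $O(\omega^{-2/5})$ remainder is obtained exactly as in part \emph{(1)}.

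\emph{Main obstacle.} The only step beyond bookkeeping is the passage to the limit under the integral after rescaling: the rescaled integral is merely conditionally convergent, and on $|v|\gtrsim1$ the rescaled phase $\rho(\omega^{-1/3}v)v^3$ (resp.\ $\sigma(\omega^{-1/5}v)v^5$) is large, so no pointwise domination is available. I would handle this by non-stationary-phase integration by parts in $v$ with constants uniform in $\omega\ge1$, which is legitimate because on the support of the rescaled amplitude $|\partial_v(\text{phase})|\gtrsim|v|^2$ (resp.\ $|v|^4$) while all $v$-derivatives of $\rho(\omega^{-1/3}v)$ (resp.\ $\sigma(\omega^{-1/5}v)$) are bounded uniformly in $\omega\ge1$. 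Everything else — the initial localization, the Taylor reduction, and the two model-integral evaluations via the Airy and Gamma identities — is routine.
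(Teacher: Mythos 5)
Your argument is correct in substance and reproduces the same constants as the paper, but by a genuinely different route. The paper first reduces the phase to an exact monomial via the smooth change of variables $\zeta=z|a(z)|^{1/3}$ (resp.\ $\eta=z|a(z)|^{1/5}$), where $\lambda(z)=\lambda(0)+z^3a(z)$, and then evaluates $\int g(\zeta)e^{i\omega\zeta^3}\,d\zeta$ by Plancherel against the Fourier transform $e^{-i\xi^3/3}$ of the Airy function (quoting H\"ormander's (7.7.30) for the quintic case); the remainder then falls out of $\Ai\bigl(y(3\omega)^{-1/3}\bigr)=\Ai(0)+O(\omega^{-1/3}|y|)$ integrated against $\widecheck{g}$. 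You instead keep the phase as $\rho(\omega^{-1/3}v)v^3$ after rescaling and identify the limiting model integral directly; your leading-order limit interchange (dominated convergence near $v=0$ plus uniform-in-$\omega$ non-stationary-phase bounds on $|v|\gtrsim1$, using $|\partial_v(\text{phase})|\gtrsim v^2$ on the rescaled support) is sound, and your evaluations of $\int_{\R}e^{i\rho(0)v^3}\,dv$ and $\int_{\R}e^{icv^5}\,dv$ via the Airy and Gamma identities are correct.

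Two caveats. First, the sharp remainder $O(\omega^{-2/3})$ is the one place your sketch does not close as written: after rescaling, the residual phase $\bigl(\rho(\omega^{-1/3}v)-\rho(0)\bigr)v^3\approx\omega^{-1/3}\rho'(0)v^4$ is only small for $|v|\ll\omega^{1/12}$, not on the whole rescaled support $|v|\lesssim\delta\omega^{1/3}$, and expanding the exponential of it produces correction integrals such as $\int_{|v|\le R}v^4e^{i\rho(0)v^3}\,dv$, which grow like $R^2$; balancing the splitting radius against the tails of the model integral does not obviously yield the full power $\omega^{-2/3}$. The clean repair is precisely the paper's (and H\"ormander's) first step: use the inverse function theorem to make the phase exactly $\lambda(0)+\zeta^3$ \emph{before} rescaling, so that there is no residual phase and the next-order term is pure bookkeeping. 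Second, your part-(2) computation in fact yields the factor $\bigl(120/|\lambda^{(5)}(0)|\bigr)^{+1/5}$, not the exponent $-1/5$ appearing in \eqref{eq:exp_quint}; your value is the correct one (it agrees with the paper's own intermediate identity $u(0)=f(0)\,\tfrac{dz}{d\eta}(0)=f(0)\bigl(120/|\lambda^{(5)}(0)|\bigr)^{1/5}$ and with the scaling of part (1)), so the exponent in the stated lemma is a typo rather than a defect of your derivation.
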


\begin{proof}
Recall that the Airy function $\Ai(x)$ is defined as the solution to the boundary value problem
\begin{align*}
    y''(x)-xy =0 \, , \qquad 
    y(0) = \frac{1}{3^{2/3}\Gamma(2/3)}\, , \qquad 
\lim_{x\to\infty}y(x) = 0 \, .
\end{align*}
By Fourier transforming the Airy equation with respect to $x$, we can verify that
\begin{align*}
    \widehat\Ai(\xi) = e^{-i\xi^3/3} \, .
\end{align*}
Now suppose $f\in C_c^{\infty}(\R)$ and $\lambda\in C^{\infty}(\R)$ such that $\lambda'(0)=\lambda''(0)=0$, but $\lambda'''(0)\neq 0$. Then there exists $a\in C^{\infty}(\R)$ such that $a(0)\neq 0$ and
\begin{align}\label{eq:az_def}
    \lambda(z) = \lambda(0) + z^3a(z)\, .
\end{align}
Introducing the change of variables 
\begin{align*}
    \zeta = z\vert \alpha(z)\vert^{1/3}\, , 
\end{align*}
the integral of interest becomes

\begin{align}\label{eq:betazeta}
        \int_{\R} f(z)e^{i\omega\lambda(z)} \,  dz = e^{i\lambda(0)\omega }\int_{\R} f(\beta(\zeta))e^{i\omega \zeta^3}\frac{dz}{d\zeta} (\beta (\zeta)) \, d\zeta\, .
\end{align}
where $\beta(\zeta)$ is the inverse change of variables, i.e., $\beta (\zeta (z))=z$. By direct substitution into the definition of $\zeta (z)$, the only solution to the equation $\zeta =0$ is $z=0$, thus $\beta(0)=0$. Since
$$\frac{d\zeta}{dz}=\frac13 z |a(z)|^{-2/3} + |a(z)|^{1/3} \, ,$$
To compute the limit as $z\to 0$, we use \eqref{eq:az_def} and the inverse Function Theorem to get
\begin{align}
    \frac{d\zeta}{dz}(0) = \left[\frac{dz}{d\zeta}(0)\right]^{-1} = \vert a(0)\vert^{-1/3} = \left(\frac{\vert\lambda'''(0) \vert}{6}\right)^{-1/3} >0\, ,
\end{align}
and so the inverse change of variables $\beta (\zeta)$ is well-defined.

Going back to \eqref{eq:betazeta}, we apply the Plancherel's theorem
\begin{align*}
    \int_{\R} f(x)\overline{g(x)}\, dx = \frac{1}{2\pi}\int_{\R} \hat f(\xi)\overline{\hat g(\xi)}\, d\xi\, ,
\end{align*}
and the first order Taylor expansion of the Airy function
\begin{align*}
    \Ai(\xi) = \Ai(0) + O(\xi)\, ,
\end{align*}
to obtain
\begin{align}
    e^{i\lambda(0)\omega}\int_{\R} &f(\beta(\zeta))e^{i\omega \zeta^3}\frac{d\zeta}{dz}(\beta (\zeta)) \, d\zeta \\
    &= {2\pi} e^{i\lambda(0)\omega }\int_{\R} \widecheck{\left(f\cdot \frac{d\zeta}{dz}\right) \circ \beta }(y) \cdot \Ai\left(\frac{y}{(3\omega)^{1/3}}\right)(3\omega)^{-1/3} \, dy \\
     &= {2\pi} e^{i\lambda(0)\omega }\int_{\R} \widecheck{\left(f\cdot \frac{d\zeta}{dz}\right) \circ \beta }(y) \cdot \left(\Ai\left(0\right) + O(\omega^{-1/3})\right)(3\omega)^{-1/3} \, dy \\
      &= {2\pi} e^{i\lambda(0)\omega }\frac{\Ai(0)}{(3\omega)^{1/3}}\int_{\R} \widecheck{\left(f\cdot \frac{d\zeta}{dz}\right) \circ \beta }(y) \, dy +  O(\omega^{-2/3}) \\
    &= {2\pi} e^{i\lambda(0)\omega }A(0)\frac{f(0)\cdot \frac{d\zeta}{dz}(0)}{(3\omega)^{1/3}}+O(\omega^{-2/3})\\
    &=2\pi e^{i\lambda(0)\omega}\Ai(0)f(0)\left(\frac{2}{\vert\lambda'''(0)\vert}\right)^{\frac13}\omega ^{-\frac13} + O(\omega^{-2/3})\, .
\end{align}
This completes the proof of the doubly degenerate stationary phase \eqref{eq:exp_airy}.

Now, for the triply degenerate case \eqref{eq:exp_quint}, where $\lambda ^{(j)}(0)=0$ for $j=1,\ldots, 4$, but $\lambda ^{(5)}(0)\neq 0$, write $\lambda(z) = \lambda (0) + a(z)z^5$, and define the analogous change of variables  $\eta \equiv z|a(z)|^{1/5}$. Setting now $\beta (\eta)$ as the inverse change of variables, i.e., $\beta (\eta (z))=z$, we can write 
\begin{align*}
    \int\limits_{\R} f(z)e^{i\omega \lambda (z)} \, dz &= e^{i\omega\lambda(0)}\int\limits_{\R} \left(f\circ \beta (\eta) \cdot \frac{dz}{d\eta}\right) e^{i\omega \eta ^5} \, d\eta  \, .
\end{align*}
Denoting $u(\eta) \equiv f\circ \beta (\eta) \cdot \frac{dz}{d\eta} $, here we can use the expansion from \cite[Equation (7.7.30)]{hormander2007analysis}, which to zeroth order reads
\begin{align*}
    \int\limits_{\R} u(\eta) e^{i\omega \eta ^4} \, d\eta &= \frac25 \Gamma \left(\frac15\right)\sin \left( \frac{2\pi}{5}\right)u(0)\omega ^{-\frac15} + O(\omega ^{-\frac25}) \\
    &= \frac25 \Gamma \left(\frac15\right)\sin \left( \frac{2\pi}{5}\right)f(\beta (0)) \frac{dz}{d\eta} (\beta (0)) \omega ^{-\frac15} + O(\omega ^{-\frac25}) \, .
\end{align*}
Here again, $\beta (0)=0$ since the only solution to the equation $\eta(z)=0$ is $z=0$, here again using the Inverse Function Theorem, we get
\begin{align*}
    \int\limits_{\R} u(\eta) e^{i\omega \eta ^4} \, d\eta = \cdots &= \frac25 \Gamma \left(\frac15\right)\sin \left( \frac{2\pi}{5}\right) \left(\frac{120}{|\lambda ^{(5)}(0)|}\right)^{-\frac15} f(0) \omega ^{-\frac15} + O(\omega ^{-\frac25}) \, .
\end{align*}
\end{proof}

\section{Derivatives of $\theta(\xi,m)$ at $\xi=0$.}\label{app:thetaderivatives}

This appendix tabulates the function $\theta(\xi,m)$ for the switching-mass model \eqref{eq:pwc_mdirac}, and its derivatives at $\xi=0$. All expressions can be obtained by direct differentiation of \eqref{eq:theta}.
\begin{lemma}\label{lem:thetader0}
    For all $m > 0$ 
    \begin{align}
        \theta'(0,m) &= \frac{\sin\left(m\right)}{m}\, ,
        \\
        \theta '' (0,m) &=0 \, ,\\
        \theta ''' (0,m) &= \frac{1}{m^3}\left[ -2\sin^3(m)+ 3m\cos(m)-3\sin(m)\cos^2(m)\right]\, ,\\
        \theta^{(4)}(0,m) &=0\, , \\
        \theta^{(5)}(0,m) &=\frac{3}{m^5}\left[-5m\cos(m)+3\sin(m)\cos^4(m)+12\sin(m)-10m\cos^3(m)-5m^2\sin(m)-4\sin^3(m)\right] \, . \label{eq:theta5_exp}
    \end{align}
\end{lemma}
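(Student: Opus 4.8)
The plan is to reduce the whole computation to matching Taylor coefficients in a single clean identity for $\sin\theta$, rather than differentiating the arctangent in \eqref{eq:theta} five times directly. First observe that $\omega(\xi;m)=\sqrt{m^2+\xi^2}$ is even in $\xi$, so the numerator $\xi\sin(\omega(\xi))$ of the argument of the arctangent in \eqref{eq:theta} is odd and its denominator $\sqrt{m^2+\xi^2\cos^2(\omega(\xi))}$ is even; since $\arctan$ is odd, $\theta(\cdot\,;m)$ is an odd function of $\xi$. This already gives $\theta''(0;m)=\theta^{(4)}(0;m)=0$ for all $m>0$, two of the claimed identities. For the odd-order derivatives the key point is the exact relation
\[
\sin\bigl(\theta(\xi;m)\bigr)=\xi\,\sinc\bigl(\omega(\xi;m)\bigr),
\]
valid for all $\xi\in\R$. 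One way to see this: from $\tan\theta=\xi\sin\omega/\sqrt{m^2+\xi^2\cos^2\omega}$ and $\theta\in(-\pi/2,\pi/2)$ one has $\cos\theta=(1+\tan^2\theta)^{-1/2}=\sqrt{m^2+\xi^2\cos^2\omega}/\omega$ (using $\xi^2\cos^2\omega+\xi^2\sin^2\omega+m^2=\omega^2$), hence $\sin\theta=\tan\theta\cos\theta=\xi\sin\omega/\omega$. Alternatively it can be read off directly from Section~\ref{app:monodromy-derivation}: the eigenvalues of $\mathscr{M}(\xi;m)$ are $\mu_\pm=i\xi\,\sinc(\omega(\xi))\pm\sqrt{\cos^2(\omega(\xi))+m^2\sinc^2(\omega(\xi))}=e^{\pm i\theta(\xi;m)}$, and equating imaginary parts gives the identity.

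Next, write $\theta(\xi;m)=a_1\xi+a_3\xi^3+a_5\xi^5+O(\xi^7)$ (odd). On the left of the identity, $\sin\theta=\theta-\theta^3/6+\theta^5/120+O(\xi^7)$ expands to $a_1\xi+\bigl(a_3-\tfrac16 a_1^3\bigr)\xi^3+\bigl(a_5-\tfrac12 a_1^2a_3+\tfrac1{120}a_1^5\bigr)\xi^5+O(\xi^7)$. On the right, expand $\omega(\xi)=\sqrt{m^2+\xi^2}=m+\tfrac{\xi^2}{2m}-\tfrac{\xi^4}{8m^3}+O(\xi^6)$ and then $\sinc(\omega(\xi))=\sinc(m)+\sinc'(m)(\omega-m)+\tfrac12\sinc''(m)(\omega-m)^2+O(\xi^6)$, obtaining $\xi\,\sinc(\omega(\xi))=\sinc(m)\,\xi+\tfrac{\sinc'(m)}{2m}\,\xi^3+\bigl(\tfrac{\sinc''(m)}{8m^2}-\tfrac{\sinc'(m)}{8m^3}\bigr)\xi^5+O(\xi^7)$. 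Matching powers of $\xi$ determines $a_1,a_3,a_5$ recursively, and then $\theta'(0;m)=a_1$, $\theta'''(0;m)=6a_3$, $\theta^{(5)}(0;m)=120a_5$. For example $a_1=\sinc(m)=\sin(m)/m$, and from $a_3=\tfrac{\sinc'(m)}{2m}+\tfrac16 a_1^3$ one gets $\theta'''(0;m)=\tfrac{3\sinc'(m)}{m}+\sinc^3(m)=\tfrac1{m^3}\bigl(3m\cos m-3\sin m+\sin^3 m\bigr)$, which equals the stated expression once $\sin^3 m-3\sin m$ is rewritten as $-2\sin^3 m-3\sin m\cos^2 m$ via $\cos^2 m=1-\sin^2 m$.

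The remaining work is to push the same bookkeeping through to the $\xi^5$ coefficient: substitute the explicit $\sinc'(m)=(m\cos m-\sin m)/m^2$ and $\sinc''(m)$, solve the matching relation for $a_5$, multiply by $120$, and reduce the resulting polynomial in $\sin m$, $\cos m$, $m$ to the normal form \eqref{eq:theta5_exp} using $\cos^2 m=1-\sin^2 m$. I expect this last algebraic reduction — tracking every $\xi^5$ contribution coming from the composition $\sinc\circ\,\omega$ and from inverting the sine series, and then massaging the answer into the particular trigonometric form written in the statement — to be the only genuinely error-prone step; the structural parts (oddness, the $\sin\theta$ identity, the lower-order coefficients) are routine. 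Finally, since every formula in the lemma is an identity between real-analytic functions of $m$ on $(0,\infty)$, verifying the expansion symbolically suffices, and the odd-derivative identities, established near an arbitrary base point, extend to all of $(0,\infty)$ by analyticity.
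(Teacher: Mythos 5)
Your proposal is correct, and it takes a genuinely different (and cleaner) route than the paper, whose Appendix~\ref{app:thetaderivatives} simply asserts that the formulas follow by ``direct differentiation of \eqref{eq:theta}'' without showing any work. Instead of differentiating the arctangent composite five times, you exploit the exact identity $\sin(\theta(\xi;m))=\xi\,\sinc(\omega(\xi;m))$ — which you correctly justify both from the $\cos\theta$ computation and from the eigenvalue formula for $\mathscr{M}(\xi;m)$ in Section~\ref{app:monodromy-derivation} — together with oddness of $\theta$ in $\xi$, and then match Taylor coefficients. This buys a structural explanation for why the even derivatives vanish and reduces the odd-order derivatives to a short recursion in $a_1,a_3,a_5$; the paper's route offers no such insight and is far more error-prone at fifth order. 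Your expansions check out: the $\xi^3$ matching gives $\theta'''(0;m)=3\sinc'(m)/m+\sinc^3(m)=m^{-3}(3m\cos m-3\sin m+\sin^3 m)$, which agrees with the stated form after $\cos^2m=1-\sin^2m$. The one step you leave undone — solving $a_5-\tfrac12a_1^2a_3+\tfrac1{120}a_1^5=\tfrac{\sinc''(m)}{8m^2}-\tfrac{\sinc'(m)}{8m^3}$ for $a_5$ and simplifying $120a_5$ — does close: with $\sinc'(m)=(m\cos m-\sin m)/m^2$ and $\sinc''(m)=(-m^2\sin m-2m\cos m+2\sin m)/m^3$ one obtains
\begin{equation*}
\theta^{(5)}(0;m)=\frac{1}{m^5}\bigl[-15m^2\sin m-45m\cos m+45\sin m+30m\sin^2 m\cos m-30\sin^3 m+9\sin^5 m\bigr],
\end{equation*}
which coincides with \eqref{eq:theta5_exp} upon substituting $\cos^4m=1-2\sin^2m+\sin^4m$ and $\cos^3m=\cos m(1-\sin^2m)$ there. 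One minor quibble: the closing appeal to analyticity in $m$ is unnecessary, since the coefficient matching is performed for each fixed $m>0$ and already yields the identities on all of $(0,\infty)$.
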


\nocite{*}
\printbibliography

\end{document}